\newcommand{\lvt}{\left|\kern-1.35pt\left|\kern-1.3pt\left|}
\newcommand{\rvt}{\right|\kern-1.3pt\right|\kern-1.35pt\right|}
\renewcommand*\env@matrix[1][*\c@MaxMatrixCols c]{%
 \hskip -\arraycolsep
 \let\@ifnextchar\new@ifnextchar
 \array{#1}}
 \newtheorem{nexam}{\bf Example}
\newtheorem{teo}{Theorem}
\newtheorem{pro}{Proposition}
\newcommand{\ii}{\operatorname{i}}
\renewcommand{\d}{\operatorname{d}}
\newcommand{\Exp}[1]{\operatorname{e}^{#1}}
\renewcommand{\Re}{\operatorname{Re}}
\newcommand{\C}{\mathbb{C}}
\newcommand{\N}{\mathbb{N}}
\title[Matrix Jacobi Polynomials via Riemann--Hilbert problem]{Matrix Jacobi Biorthogonal Polynomials via Riemann--Hilbert problem}
\subjclass[2020]{33C45, 33C47, 42C05, 47A56.}
\keywords{Riemann--Hilbert problems; matrix Pearson equations; discrete integrable systems; non-Abelian discrete Painlev\'e IV equation}
\author[A Branquinho]{Amílcar Branquinho$^1$}
\address{$^1$Departamento de Matem\'atica,
Universidade de Coimbra, 3001-454 Coimbra, Portugal}
\email{ajplb@mat.uc.pt}
\author[A Foulqué]{Ana Foulqui\'e-Moreno$^2$}
\address{$^2$Departamento de Matemática, Universidade de Aveiro, 3810-193 Aveiro, Portugal}
\email{foulquie@ua.pt}
\author[A Fradi]{Assil Fradi$^3$}
\address{$^3$
Mathematical Physics Special Functions and Applications Laboratory, The Higher School of Sciences and Technology of Hammam Sousse, University of Sousse, Sousse 4002, Tunisia}
\email{assilfradi@ua.pt}
\author[M Mañas]{Manuel Mañas$^4$}
\address{$^4$Departamento de Física Teórica, Universidad Complutense de Madrid, 28040-Madrid, Spain \&
Instituto de Ciencias Matematicas (ICMAT), Campus de Cantoblanco UAM, 28049-Madrid, Spain}
\email{manuel.manas@ucm.es}
\thanks{$^1$Acknowledges Centro de Matemática da Universidade de Coimbra (CMUC) -- UID/MAT/00324/2020, funded by the Portuguese Government through FCT/MEC and co-funded by the European Regional Development Fund through the Partnership Agreement PT2020.}
\thanks{$^2$ and $^3$ Acknowledges CIDMA Center for Research and Development in Mathematics and Applications (University of Aveiro) and the Portuguese Foundation for Science and Technology (FCT) within project UID/MAT/04106/2020.}
\thanks{$^4$Thanks financial support from the Spanish ``Agencia Estatal de Investigación'' research project [PGC2018-096504-B-C33], \emph{Ortogonalidad y Aproximación: Teoría y Aplicaciones en Física Matemática}
and research project [PID2021- 122154NB-I00], \emph{Ortogonalidad y aproximación con aplicaciones en machine learning y teoría de la probabilidad}.}
\begin{document}

\maketitle


\begin{abstract}
We consider matrix orthogonal polynomials related to Jacobi type matrices of weights that can be defined in terms of a given matrix Pearson equation. Stating a Riemann--Hilbert problem
we can derive first and second order differential relations that these matrix orthogonal polynomials and the second kind functions associated to them verify.
For the corresponding matrix recurrence coefficients, non-Abelian extensions of a family of discrete Painlev\'e d-P$_{IV}$ equations are obtained for the three term recurrence relation coefficients.
\end{abstract}

\section{Introduction}

In this paper we deal with \emph{regular matrix of weights} $ W(z)$,~i.e., their moments
\begin{align*}
W_n := \frac{1} {2\pi\ii} \int_\gamma z^n W (z) \d z,
&&
n\in\N ,
\end{align*}
are such that
$\det \big[ W_{j+k} \big]_{j,k=0, \ldots n}\not = 0 $, $n \in \N:=\{0,1,\ldots\} $, and where the support of $W$, is a non self-intersecting smooth curve, $\gamma$, on the complex plane with two end points at $a,b \in \mathbb C$, and such that it intersects the circles $|z|=R$, $R \in \mathbb{R}^{+}$, once and only once (i.e., it can be taken as a determination curve for arg $z$).

A Riemann--Hilbert approach was used to discuss matrix biorthogonal polynomials of Hermite~\cite{BFM,CM,Cassatella_3,GIM} and Laguerre type~\cite{BFAM}.
In this paper, we focus on Jacobi~type examples:
We say that a \emph{matrix of weights} $W= 
\begin{bsmallmatrix} 
W^{(1,1)} & \cdots & W^{(1,N)} \\
\vdots & \ddots & \vdots \\
W^{(N,1)} & \cdots & W^{(N,N)} 
\end{bsmallmatrix}
\in \C^{ N\times N}$ with support $\gamma$ is of \emph{Jacobi type} if
the entries~$W^{(j, k)}$ of the matrix measure $W$ can be written~as
\begin{align}\label{eq:the_weights}
W^{(j, k)}(z)=\sum_{m \in I_{j, k}} \varphi_{m}(z) (a+z)^{\alpha_{m}}(b-z)^{\beta_m} \log ^{p_{m}} (a+z)\log^{q_m}(b-z), 
&&
z \in \gamma ,
\end{align} 
where $I_{j, k}$ denotes a finite set of indexes, $\Re(\alpha_{m})$, $\Re(\beta_m)>-1$, $p_{m}$, $q_m \in \mathbb{N}$, $a\neq b$ real numbers and $\varphi_{m}$ is
Hölder continuous,
bounded and non-vanishing on~$\gamma$.
We assume that the determination of logarithm and the powers are taken along~$\gamma$. We will request, in the development of the theory, that the functions~$\varphi_{m}$ have a holomorphic extension to the whole complex plane.

This definition includes the non scalar examples of Jacobi type weights given in the literature~\cite{nuevo,McCc,McAg,McAg2,AgPJ,SvA}, and as far as we know it was not been studied in all its generality.

In this work, for the sake of simplicity, the finite end points of the curve $\gamma$ is taken at the origin, $a=0$ and $b=1$ with no loss of generality, as a similar arguments apply for $a \neq 0$ or $b\neq1$. In~\cite{duran2004} different examples of Laguerre matrix weights for the matrix orthogonal polynomials on the real line are studied.

The subject of orthogonal polynomials covers a wide range of topics within mathematics, as well as its applications. Particularly when dealing with Jacobi polynomials, they have played an important role in mathematical analysis.
Their origins can be traced back to classical problems
such as electromagnetism, potential theory and many other fields. Particularly, the Legendre and Chebyshev polynomials played a significant role in the development of spectral methods for partial differential equations~\cite{deift,ismail}.

It was Krein~\cite{Krein1,Krein2} who first introduced the matrix extensions of scalar orthogonal polynomials. On this topic, there have been some relevant papers published afterward~\cite{D2,D3,D4,DP,DvA,geronimo,MS,SvA}, as well as more recent contributions, including for instance~\cite{bere,geronimo}.
Numerous findings have been made such as scattering problem resolution and matrix Favard theorem~\cite{nikishin}.
Later, it was proven that matrix orthogonal polynomials sometimes satisfy some properties as do the classical orthogonal polynomials, such as the scalar type Rodrigues' formula~\cite{duran20052,duran_1}.
Moreover, the last few years have seen the discovery of a plethora of families of orthogonal matrix polynomials that are eigenfunctions of certain fixed second order linear differential operators with a matrix coefficients independent of the orthogonal polynomial degree~\cite{borrego,duran_3,duran2004}.

In this work we apply the Riemann--Hilbert analysis to the study of families of polynomials and its second kind functions that are orthogonal with respect to Jacobi type matrices of weights coming from a matrix Pearson equation.
We are able to derive first and second order differential relations that these matrix orthogonal polynomials and the second kind functions associated to them verify.
For the corresponding matrix recurrence coefficients, non-Abelian extensions of a family of discrete Painlev\'e d-P$_{IV}$ equations are obtained for the three term recurrence relation coefficients.

The structure of this work is the following:
In section~\ref{sec:2}, we present the basic theory about matrix biorthogonal polynomials and state the left and right Riemann--Hilbert problems.
Using orthogonal polynomials and second kind functions, the unique solution for the Riemann--Hilbert problem is given. 
In section~\ref{sec:3}, we discuss the analytic properties of the constant jump fundamental matrix associated with a matrix of weight solution of a Pearson type equation of Jacobi type.
In section~\ref{sec:4}, a Riemann--Hilbert approach is taken to derive the first and second order differential equation from the structure matrix.
We show that these equations reduce to the scalar case when the commutativity is imposed.
In section~\ref{sec:5}, we find a discrete nonlinear relation for the recursion coefficients that can be considered an extension of discrete Painlev\'e~IV. This is accomplished by computing the explicit expression for the structure~matrix.

\section{Biorthogonality and Riemann--Hilbert problem} \label{sec:2}

Given a 
regular matrix of weights $W$,
we define \emph{sequences of matrix monic polynomials},
$\left\{ P_{n}^{\mathsf L} (z) \right\}_{n \in \mathbb{Z}_{+}}$ and $\left\{ P_{n}^{\mathsf R}(z)\right\}_{n \in \mathbb{N}}$, respectively left orthogonal and right orthogonal, were \linebreak
$\deg P_{n}^{\mathsf L} (z)=n$ and $\deg P_{n}^{\mathsf R} (z)=n$, $n \in \mathbb{N}$, by the conditions,
\begin{align} \label{eq:ortogonalidadL}
\frac 1 {2\pi\ii} \int_\gamma {P}_n^{\mathsf L} (z) W (z) z^k \d z
= \delta_{n,k} C_n^{-1} , &&
\frac 1 {2\pi\ii} \int_\gamma z^k W (z) {P}_n^{\mathsf R} (z) \d z
= \delta_{n,k} C_n^{-1} , 
\end{align}
for $k = 0, 1, \ldots , n $ and $n \in \mathbb{N}$,
where $C_n $ is a nonsingular matrix.

The matrix of weights $ W $ induces a sesquilinear form in the set of matrix 
polynomials $\C^{N\times N}[z]$ given by
\begin{align*}
\langle P,Q \rangle_W :=\frac 1 {2\pi\ii} \int_\gamma {P} (z) W (z) Q(z) \d z,
\end{align*}
for which $\big\{ P_n^{\mathsf L}(z)\big\}_{n\in\N}$ and $\big\{ P_n^{\mathsf R}(z)\big\}_{n\in\N}$ are biorthogonal 
\begin{align*}
\big\langle P_n^{\mathsf L}, {P}_m^{\mathsf R} \big\rangle_ W
& = \delta_{n,m} C_n^{-1}, & n , m & \in \mathbb N .
\end{align*}
As the polynomials are chosen to be monic, we can write
\begin{align*}
{P}_n^{\mathsf L} (z) &= I_N z^n + p_{\mathsf L,n}^1 z^{n-1} + p_{\mathsf L,n}^2 z^{n-2} + \cdots + p_{\mathsf L,n}^n , \\
{P}_n^{\mathsf R} (z) &= I_N z^n + p_{\mathsf R,n}^1 z^{n-1} + p_{\mathsf R,n}^2 z^{n-2} + \cdots + p_{\mathsf R,n}^n , 
\end{align*}
with matrix coefficients $p_{\mathsf L, n}^k , p_{\mathsf R, n}^k \in 
\C^{N\times N} $, $k = 0, \ldots, n $ and $n \in \mathbb N $ (imposing that $p_{\mathsf L,n}^0 = p_{\mathsf R,n}^0=I_N $, $n \in \mathbb N $). Here $I_N\in\C^{N\times N}$ denotes the identity matrix.

We define, for all $n \in \mathbb N$, the \emph{sequence of second kind matrix functions} by
\begin{align*}
Q^{\mathsf L}_n (z)
:= \frac1{ 2 \pi \ii} \int_\gamma \frac{P^{\mathsf L}_n (z')}{z'-z} { W (z')} \d z^\prime , 
 &&
{Q}_n^{\mathsf R} (z)
:= \frac{1}{2\pi \ii} \int_\gamma W (z') \frac{P^{\mathsf R}_{n} (z')}{z'-z} \d z' .
\end{align*} 
From the orthogonality conditions~\eqref{eq:ortogonalidadL} 
we have, for $n \in \N$, the fol\-low\-ing asymptotic expansions, as $|z|\to\infty$
\begin{align*} 
Q^{\mathsf L}_n (z) 
& = - C_n^{-1} \big( I_N z^{-n-1} + q_{\mathsf L,n}^1 z^{-n-2} + \cdots \big),
\\
Q^{\mathsf R}_n (z) 
& = - \big( I_N z^{-n-1} + q_{\mathsf R,n}^1 z^{-n-2} + \cdots \big)C_n^{-1} . 
\end{align*}
We gather together these objects in the matrix
\begin{align}
Y^{\mathsf L}_n (z) & 
 : = 
\left[ \begin{matrix}
 {P}^{\mathsf L}_{n} (z) & Q^{\mathsf L}_{n} (z) \\[.05cm]
-C_{n-1} {P}^{\mathsf L}_{n-1} (z) & -C_{n-1} Q^{\mathsf L}_{n-1} (z)
 \end{matrix} \right]
 ,
\label{eq:ynl}
 &&
{Y}^{\mathsf R}_n (z) 
 : = 
\left[ \begin{matrix}
P^{\mathsf R}_{n} (z) & - P^{\mathsf R}_{n-1} (z) C_{n-1} \\[.05cm]
{Q}^{\mathsf R}_{n} (z) & - {Q}^{\mathsf R}_{n-1} (z) C_{n-1}
 \end{matrix} \right] .
\end{align}
In terms of the transfer matrices, the three term recurrence relations for $P^{\mathsf L}_{n}$, $P^{\mathsf R}_{n}$ and $Q^{\mathsf L}_{n}$,~$Q^{\mathsf R}_{n}$ read,
\begin{align*}
Y_{n+1}^{\mathsf L} (z) & 
 = T^{\mathsf L}_n (z) Y^{\mathsf L}_n (z),&&
 Y_{n+1}^{\mathsf R} (z)
= Y^{\mathsf R}_n (z) T^{\mathsf R}_n (z) ,
 &&
n \in \mathbb N ,
\end{align*}
where,
\begin{align*}
 T^{\mathsf L}_n=\begin{bmatrix}
z I_N - \beta_n^{\mathsf L} & C_{n}^{-1} \\[.05cm]
-C_{n} & {0}_N
\end{bmatrix},
&& T^{\mathsf R}_n= \begin{bmatrix}
z I_N - \beta_n^{\mathsf R} & -C_{n} \\[.05cm]
C_{n}^{-1} & {0}_N
\end{bmatrix} ,
\end{align*}
with initial conditions,
$ {P}^{\mathsf L}_{-1} = {P}^{\mathsf R}_{-1} = 0_N$,
${P}^{\mathsf L}_{0} = {P}^{\mathsf R}_{0} = I_N $,
$Q_{-1}^{\mathsf L}(z) = Q_{-1}^{\mathsf R}(z) =-C_{-1}^{-1}$,
$Q_{0}^{\mathsf L}(z) = Q_{0}^{\mathsf R}(z)= S_{W}(z):=\frac{1}{2 \pi \mathrm{i}} \int_{\gamma} \frac{W\left(z^{\prime}\right)}{z^{\prime}-z} \operatorname{d} z^{\prime}$,
where $S_{W}(z)$ is the Stieltjes--Markov transformation and $\beta_n^{\mathsf R} := C_n \beta^{\mathsf L}_n C_n^{-1}$.
We also know, cf. for example~\cite{BFM}, that
\begin{align*}
\big( {Y}^{\mathsf L}_n (z)\big)^{-1} = 
\left[ \begin{matrix}
0_N&I_N\\[.05cm]
-I_N &0_N
 \end{matrix} \right]
 {Y}^{\mathsf R}_n(z)
\left[ \begin{matrix}
0_N&-I_N\\[.05cm]
I_N &0_N
 \end{matrix} \right]
 .
\end{align*}

Now, we state a theorem on Riemann--Hilbert problem for the Jacobi type weights.
\begin{teo} \label{teo:LRHP}
Given a regular Jacobi type matrix of weights $W$ with support on $\gamma$ we have the matrix function $Y^{\mathsf L}_n$ and $Y^{\mathsf R}_n$, defined 
by~\eqref{eq:ynl}
is, for each $n \in \N$, the unique solution of the following Riemann--Hilbert problems, which consists, respectively, in the determination of a~$2N \times 2 N$ complex matrix function such~that:

{\noindent}{\rm (RH1):} $ Y_n^{\mathsf L}$ and $Y_n^{\mathsf R}$ is holomorphic in $\C \setminus \gamma $.

{\noindent}{\rm (RH2):} Satisfies the jump condition
\begin{align*} 
\big( Y^{\mathsf L}_n (z) \big)_+
=
\big( Y^{\mathsf L}_n (z) \big)_- \,\left[ \begin{matrix}I_N & W (z) \\ {0}_N & I_N \end{matrix} \right], 
 &&
\big( Y^{\mathsf R}_n (z) \big)_+ 
=
\left[ \begin{matrix}
 I_N & {0}_N \\ 
 W (z) & I_N 
 \end{matrix} \right] \big( Y^{\mathsf R}_n (z) \big)_- ,
 && z \in \gamma .
\end{align*}
{\noindent}{\rm (RH3):} Has the following asymptotic behavior,
as $|z| \to \infty$
\begin{align*} 
 Y_n^{\mathsf L} (z)
 =
\left(I_{2N}+\operatorname{O}({1}/{z})\right)\left[ \begin{matrix}z^{n}I_N & 0_N \\ 0_N & z^{-n}I_N \end{matrix} \right] , 
&&
Y_n^{\mathsf R} (z) = 
\left[ \begin{matrix}
I_N z^n & {0}_N \\ 
{0}_N & I_N z^{-n} 
 \end{matrix} \right]
\Big( I_N + \operatorname{O}({1}/{z}) \Big) .
\end{align*}
{\noindent}{\rm (RH4):}
$
Y^{\mathsf L}_n (z) 
= \left[ \begin{matrix}
\operatorname{O} (1) & s^{\mathsf L}_{1}(z) \\[.1cm]
\operatorname{O} (1) & s^{\mathsf L}_{2}(z) 
 \end{matrix} \right] $,
 $
Y^{\mathsf R}_n (z) 
= 
\left[ \begin{matrix}
\operatorname{O} (1) & \operatorname{O} (1) \\ 
s^{\mathsf R}_1(z) & s^{\mathsf R}_2(z) 
 \end{matrix} \right] $, 
as 
$z \to 0$,
with \linebreak
$\displaystyle \lim_{z \to 0} z s^{\mathsf L}_j(z) = 0_{N}$
and
$\displaystyle \lim_{z \to 0} z s^{\mathsf R}_j (z) =0_N$,
$j=1,2$.

{\noindent}{\rm (RH5):}
$
Y_{n}^{\mathsf L}(z)=
\left[ \begin{matrix}
\operatorname{O}(1) & r_{1}^{\mathsf L}(z) \\[.1cm]
\operatorname{O}(1) & r_{2}^{\mathsf L}(z)
 \end{matrix} \right]$,
$
Y_{n}^{\mathsf L}(z)
=\left[\begin{matrix}
\operatorname{O}(1) & \operatorname{O}(1) \\
r_{1}^{\mathsf R}(z) & r_{2}^{\mathsf R}(z)
\end{matrix}\right]
$,
as 
$z \to 1$,
with \linebreak
$\displaystyle \lim_{z \to 1} (1-z) r_{j}^{\mathsf L}(z)=0_{N}$
and
$\displaystyle \lim_{z \to 1} (1-z) r_{j}^{\mathsf R}(z)=0_{N}$,
$j=1,2$. 
The $s^{\mathsf L}_{i}$, $s^{\mathsf R}_{i}$ (respectively, $r^{\mathsf L}_{i}$ and $r^{\mathsf R}_{i}$) could be replaced by $\operatorname{o}({1}/{z})$, as $z\to 0$ (respectively, $\operatorname{o}({1}/({1-z}))$, as $z\to 1$).
The $ \operatorname{O} $ and $\operatorname{o}$ conditions are understood entry-wise.
\end{teo}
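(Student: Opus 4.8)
The plan is to prove separately that the matrices $Y^{\mathsf L}_n$ and $Y^{\mathsf R}_n$ defined in~\eqref{eq:ynl} solve the respective problems (existence) and that no other function can (uniqueness); I carry out the details for $Y^{\mathsf L}_n$, since $Y^{\mathsf R}_n$ follows by transposing every one-sided statement. For existence, (RH1) is immediate: the entries $P^{\mathsf L}_n$, $P^{\mathsf L}_{n-1}$ are polynomials, hence entire, while $Q^{\mathsf L}_n$, $Q^{\mathsf L}_{n-1}$ are Cauchy transforms of densities supported on $\gamma$ and so are holomorphic on $\C\setminus\gamma$. For (RH2) I would apply the Plemelj--Sokhotski formula to $Q^{\mathsf L}_n$, whose boundary values across $\gamma$ satisfy $(Q^{\mathsf L}_n)_+-(Q^{\mathsf L}_n)_-=P^{\mathsf L}_nW$; the polynomial first block-column has no jump, and together these give exactly the unipotent upper-triangular jump of (RH2). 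Condition (RH3) is read off from the monic normalization $P^{\mathsf L}_n=I_Nz^n+\cdots$ and the expansions $Q^{\mathsf L}_n(z)=-C_n^{-1}(I_Nz^{-n-1}+\cdots)$ recorded above, after pulling out the diagonal factor $\operatorname{diag}(z^nI_N,z^{-n}I_N)$.

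The analytic heart of the existence half is (RH4) and (RH5), the endpoint bounds. Near $z=0$ the weight~\eqref{eq:the_weights} is a finite combination of terms $\varphi_m(z)z^{\alpha_m}\log^{p_m}(z)$ with $\Re(\alpha_m)>-1$ and $\varphi_m$ Hölder, bounded; the classical local estimates for Cauchy integrals whose density has an integrable algebraic--logarithmic singularity at an endpoint then show that $Q^{\mathsf L}_n(z)$ grows strictly more slowly than $z^{-1}$, that is $\lim_{z\to0}z\,Q^{\mathsf L}_n(z)=0_N$, which is the content of (RH4) (the polynomial first block-column being trivially $\operatorname{O}(1)$). The same analysis at $z=1$, where the relevant factors are $(1-z)^{\beta_m}\log^{q_m}(1-z)$ with $\Re(\beta_m)>-1$, yields (RH5); the sharper $\operatorname{o}$ formulations follow from the same estimates.

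For uniqueness I would let $\widetilde Y^{\mathsf L}_n$ be any further solution and set $R:=\widetilde Y^{\mathsf L}_n\,(Y^{\mathsf L}_n)^{-1}$, using that $Y^{\mathsf L}_n$ is invertible with the explicit inverse in terms of $Y^{\mathsf R}_n$ recorded before the statement. Since both factors carry the jump of (RH2), the jumps cancel and $R$ extends holomorphically across $\gamma$, so its only possible singularities are the isolated points $z=0,1$. To see these are removable I would exploit the structure of that explicit inverse: conjugating the column-type endpoint bounds of the constructed $Y^{\mathsf R}_n$ by the constant swap matrices turns $(Y^{\mathsf L}_n)^{-1}$ into a matrix whose first block-row is $\operatorname{o}(1/z)$ and whose second block-row is $\operatorname{O}(1)$ near $z=0$. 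Multiplying this on the left by $\widetilde Y^{\mathsf L}_n$, whose first block-column is $\operatorname{O}(1)$ and second block-column is $\operatorname{o}(1/z)$, pairs an $\operatorname{O}(1)$ factor with an $\operatorname{o}(1/z)$ factor in every term, so each block of $R$ is $\operatorname{o}(1/z)$; by Riemann's removable-singularity theorem (if $z f(z)\to0$ then $f$ is regular) the singularity at $0$ is removable, and likewise $\operatorname{o}(1/(1-z))$ gives removability at $1$. Hence $R$ is entire, and since (RH3) forces $R(z)\to I_{2N}$ as $z\to\infty$, Liouville's theorem yields $R\equiv I_{2N}$, i.e. $\widetilde Y^{\mathsf L}_n=Y^{\mathsf L}_n$.

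I expect the main obstacle to be the endpoint work. On the existence side it is the local Muskhelishvili-type analysis of Cauchy integrals whose densities combine a fractional power $z^{\alpha_m}$ of possibly negative real part with logarithmic factors, which must be done uniformly enough to produce the clean $\operatorname{o}(1/z)$ and $\operatorname{o}(1/(1-z))$ bounds. On the uniqueness side the naive estimate of $R$ through the adjugate only gives $\operatorname{o}(1/z^{N})$, which does not by itself exclude a pole; the point that makes the argument work is the explicit inverse relation, which pairs the $\operatorname{O}(1)$ and $\operatorname{o}(1/z)$ blocks so as to recover the decisive $\operatorname{o}(1/z)$ bound. Keeping track of this pairing, rather than the crude entrywise growth, is the step that requires care.
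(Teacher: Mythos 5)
Your argument is correct and is essentially the same as the one the paper relies on: the paper's proof is a one-line citation to~\cite{BFM} and~\cite{BFAM}, which establish existence exactly as you do (Plemelj--Sokhotski for the jump, monic normalization for the asymptotics, Gakhov--Muskhelishvili endpoint estimates for the $\operatorname{o}(1/z)$ and $\operatorname{o}(1/(1-z))$ bounds) and prove uniqueness via the ratio $R=\widetilde Y^{\mathsf L}_n (Y^{\mathsf L}_n)^{-1}$, the explicit inverse in terms of $Y^{\mathsf R}_n$, removability at the endpoints from the pairing of $\operatorname{O}(1)$ with $\operatorname{o}(1/z)$ blocks, and Liouville. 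No substantive difference in route or in the points requiring care.
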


\begin{proof}
A very similar proof can be found in~\cite{BFM} and~\cite{BFAM}.
\end{proof}

\section{Fundamental matrices} \label{sec:3}

\subsection{Pearson equation}

Here we consider matrix of weights, $W$, satisfying a matrix Pearson type equation
\begin{align} \label{eq:pearsonjacobi}
z (1-z) W^\prime (z) = h^{\mathsf L} (z) W (z) + W (z) h^{\mathsf R} (z) ,
\end{align}
with entire matrix functions $h^{\mathsf L} $, $ h^{\mathsf R} $.
If we take a matrix function $W^{\mathsf L}$ such that
\begin{align} \label{eq:pearsonjacobileft}
z (1-z) ( W^{\mathsf L} )^\prime (z) = h^{\mathsf L} (z) W^{\mathsf L} (z) ,
\end{align}
then there exists a matrix function $W^\mathsf R (z)$ such that $W (z) = W^{\mathsf L}(z) W^{\mathsf R} (z)$ with
\begin{align} \label{eq:pearsonjacobiright}
z (1-z) ( W^{\mathsf R} )^\prime (z) = W^{\mathsf R} (z) h^{\mathsf R} (z) .
\end{align}
The reciprocal is also true.

The solution of~\eqref{eq:pearsonjacobileft} and~\eqref{eq:pearsonjacobiright} will have possibly branch points
at $0$ and $1$, cf.~\cite{wasow}. This means that the exists constant matrices, $\mathsf C_j^\mathsf L$, $\mathsf C_j^\mathsf R$, with $j =0,1$, such that
\begin{align}
\label{eq:salto0}
(W^{\mathsf L} (z))_- = (W^{\mathsf L} (z))_+ \mathsf C_0^\mathsf L, &&
(W^{\mathsf R} (z))_- = \mathsf C_0^\mathsf R (W^{\mathsf R} (z))_+, && \text{in } & (0,1) ,
 \\
\label{eq:salto1}
(W^{\mathsf L} (z))_- = (W^{\mathsf L} (z))_+ \mathsf C_1^\mathsf L, &&
(W^{\mathsf R} (z))_- = \mathsf C_1^\mathsf R (W^{\mathsf R} (z))_+, && \text{in } & (1,+\infty) .
\end{align}
We introduce, the \emph{constant jump fundamental matrices}, for $n \in \mathbb N$,
\begin{align}
\label{eq:zn1}
Z_n^{\mathsf L}(z) 
& : = Y^{\mathsf L}_n (z) 
\left[ \begin{matrix} 
W^{\mathsf L} (z) & 0_N \\ 
0_N & ( W^{\mathsf R} (z))^{-1} 
 \end{matrix} \right] ,
 &&
{Z}^{\mathsf R}_n (z) : =
\left[ \begin{matrix} 
W^{\mathsf R} (z) & 0_N \\ 
0_N & ( W^{\mathsf L} (z))^{-1} 
 \end{matrix} \right]
{Y}^{\mathsf R}_n (z) .
\end{align}
The constant jump fundamental matrices~$Z^{\mathsf L}_n$ and $Z^\mathsf R_n$ satisfy, for each $n \in \N$, the fol\-low\-ing properties: 

\begin{enumerate}

\item
Are holomorphic on $\C \setminus [0,+\infty)$.
\item
Present the fol\-low\-ing \emph{constant jump condition} on $(0,1)$
\begin{align*}
\big( Z^{\mathsf L}_n (z) \big)_+ &
= \big( Z^{\mathsf L}_n (z) \big)_- 
\left[\begin{matrix} 
\mathsf C_0^{\mathsf L} & \mathsf C_0^{\mathsf L}
 \\[.1cm]
0_N & I_N
\end{matrix} \right], 
 &&
\big( {Z}^{\mathsf R}_n (z) \big)_+ 
=
\left[\begin{matrix} 
I_N & {0}_N
 \\[.1cm]
\mathsf C_0^{\mathsf R} & \mathsf C_0^{\mathsf R}
\end{matrix} \right]
\big( {Z}^{\mathsf R}_n (z) \big)_- .
\end{align*}

\item
Present the fol\-low\-ing \emph{constant jump condition} on $(1,+\infty)$
\begin{align*}
\big( Z^{\mathsf L}_n (z) \big)_+ 
& = 
\big( Z^{\mathsf L}_n (z) \big)_- 
\left[\begin{matrix} 
\mathsf C_1^{\mathsf L} & 0_N
 \\[.1cm]
0_N & \mathsf C_1^{\mathsf R}
\end{matrix} \right],
 &&
\big( {Z}^{\mathsf R}_n (z) \big)_+
=
\left[\begin{matrix} 
\mathsf C_1^{\mathsf R} & {0}_N
 \\[.1cm]
0_N & \mathsf C_1^{\mathsf L}
\end{matrix} \right]
\big( {Z}^{\mathsf R}_n (z) \big)_- .
\end{align*}
\end{enumerate}

Now, we will explicit the constant jump matrix 
in the special case when we have the following decompositions for the matrix of weights,
$W (z) = W^{\mathsf L} (z) W^{\mathsf R} (z)$, with:
\begin{align}\label{eq:partial_Pearson1}
z \left(W^\mathsf L\right)^{\prime}(z) = \tilde h^{\mathsf L}(z) W^{\mathsf L}(z), &&
(1-z) \left(W^\mathsf R\right)^{\prime}(z)
 = W^{\mathsf R}(z) \tilde h^{\mathsf R}(z) ,
\end{align}
where $h^{\mathsf L}$ and $h^{\mathsf R}$ are entire functions.
Therefore, the matrix
$ W (z) =W^\mathsf L (z) W^\mathsf R (z)$
is such~that,
\begin{align*}
z(1-z) W^{\prime}(z)= {h}^{\mathsf L} (z) W(z)+ W(z) {h}^{\mathsf R} (z),
\end{align*} 
where ${h}^{\mathsf L}(z)=(1-z) \tilde h^{\mathsf L} (z)$ and ${h}^{\mathsf R}(z)=z \tilde h^{\mathsf R} (z)$. 

General solutions $W^\mathsf L$ and $W^\mathsf R$ of~\eqref{eq:partial_Pearson1} are given explicitly~(cf.~for example~\cite{wasow}) by
\begin{align}
\label{eq:casoparticular}
 W^{\mathsf L}(z) = H^{\mathsf L}(z) z^{\alpha} W_0^{\mathsf L},&& W^{\mathsf R}(z) =W_0^{\mathsf R} (1-z)^{\beta} H^{\mathsf R}(z),
\end{align}
where 
$
H^{\mathsf L}(z)$ and $H^{\mathsf R}(z)$ are entire and nonsingular matrix functions, and $\alpha, \beta$ are constant matrices, as well as~$W_0^{\mathsf L}$ and $W_0^{\mathsf R}$
are constant nonsingular matrices.

It is easy to see that $W$, within this decomposition, is a Jacobi type weight matrix defined by~\eqref{eq:the_weights}.
From~\eqref{eq:casoparticular}, the constant jump fundamental matrices~$Z^{\mathsf L}_n(z) $ and $Z^\mathsf R_n(z)$ have the following \emph{constant jump condition} on $(0,1)$
\begin{align*}
\big( Z^{\mathsf L}_n (z) \big)_+ &
= \big( Z^{\mathsf L}_n (z) \big)_- 
\left[\begin{matrix} 
{(W_0^{\mathsf L}})^{-1}\Exp{- 2 \pi \ii \alpha} W_0^{\mathsf L} & {(W_0^{\mathsf L}})^{-1}\Exp{- 2 \pi \ii \alpha} W_0^{\mathsf L}
 \\[.1cm]
0_N & I_N
\end{matrix} \right], \\
\big( {Z}^{\mathsf R}_n (z) \big)_+ &
=
\left[\begin{matrix} 
I_N & {0}_N
 \\[.1cm]
I_N & ({W_0^{\mathsf L}})^{-1}\Exp{2 \pi i \alpha} W_0^{\mathsf L}
\end{matrix} \right]
\big( {Z}^{\mathsf R}_n (z) \big)_- ,
\end{align*}
as well as, the \emph{constant jump condition} on $(1,+\infty)$
\begin{align*}
\big( Z^{\mathsf L}_n (z) \big)_+ &
= \big( Z^{\mathsf L}_n (z) \big)_- 
\left[\begin{matrix} 
{(W_0^{\mathsf L}})^{-1}\Exp{- 2 \pi \ii \alpha} W_0^{\mathsf L} & 0_N
 \\[.1cm]
0_N & W_0^{\mathsf R}\Exp{2 \pi i \beta} ({W_0^{\mathsf R}})^{-1}
\end{matrix} \right],\\
\big( {Z}^{\mathsf R}_n (z) \big)_+ &
=
\left[\begin{matrix} 
W_0^{\mathsf R}\Exp{-2 \pi i \beta} ({W_0^{\mathsf R}})^{-1}& {0}_N
 \\[.1cm]
0_N & \left({W_0^{\mathsf L}}\right)^{-1}\Exp{2 \pi \ii \alpha} W_0^{\mathsf L}
\end{matrix} \right]
\big( {Z}^{\mathsf R}_n (z) \big)_- .
\end{align*}
In fact, from the definition of $Z^{\mathsf L}_n (z)$ we have
\begin{align*}
\big(Z^{\mathsf L}_n (z) \big)_+ = \big( Y^{\mathsf L}_n (z) \big)_+ 
\left[ \begin{matrix}
(W^{\mathsf L} (z))_+ & 0_N \\ 
0_N & ( W^{\mathsf R} (z))_+^{-1} 
 \end{matrix} \right] , 
\end{align*}
and taking into account Theorem~\ref{teo:LRHP} we 
successively get
\begin{align*}
\big(Z^{\mathsf L}_n (z) \big)_+ &
= \big( Y^{\mathsf L}_n (z) \big)_-
\left[ \begin{matrix} 
I_N & (W^{\mathsf L} (z) W^{\mathsf R} (z))_+ \\ 
0_N & I_N
 \end{matrix} \right]
\left[ \begin{matrix} 
(W^{\mathsf L} (z))_+ & 0_N \\ 
0_N & ( W^{\mathsf R} (z))_+^{-1} 
 \end{matrix} \right]\\
&
 =
\resizebox{.815\hsize}{!}{$
\big( Y^{\mathsf L}_n (z) \big)_-
\left[ \begin{matrix} 
(W^{\mathsf L} (z))_- & 0_N \\ 
0_N & ( W^{\mathsf R} (z))_-^{-1} 
 \end{matrix} \right]
\left[ \begin{matrix} 
(W^{\mathsf L} (z))_-^{-1} & 0_N \\ 
0_N & ( W^{\mathsf R} (z))_- 
 \end{matrix} \right]
\left[ \begin{matrix} 
(W^{\mathsf L} (z))_+ & (W^{\mathsf L} (z))_+ \\ 
0_N & ( W^{\mathsf R} (z))_+^{-1} 
 \end{matrix} \right]$}
 \\
&
= 
\big(Z^{\mathsf L}_n (z) \big)_- 
\left[ \begin{matrix} 
(W^{\mathsf L} (z))_-^{-1} (W^{\mathsf L} (z))_+ &(W^{\mathsf L} (z))_-^{-1} (W^{\mathsf L} (z))_+ \\ 
0_N & W^{\mathsf R} (z)_- ( W^{\mathsf R} (z))_+^{-1} 
 \end{matrix} \right] .
\end{align*}
Similarly over $(1,+\infty)$ we have
\begin{align*}
\big(Z^{\mathsf L}_n (z) \big)_+ &
= \big( Y^{\mathsf L}_n (z) \big)_-
\left[ \begin{matrix} 
(W^{\mathsf L} (z))_+ & 0_N \\ 
0_N & ( W^{\mathsf R} (z))_+^{-1} 
 \end{matrix} \right]\\
&
 =
\resizebox{.815\hsize}{!}{$
 \big( Y^{\mathsf L}_n (z) \big)_-
\left[ \begin{matrix} 
(W^{\mathsf L} (z))_- & 0_N \\ 
0_N & ( W^{\mathsf R} (z))_-^{-1} 
 \end{matrix} \right]
\left[ \begin{matrix} 
(W^{\mathsf L} (z))_-^{-1} & 0_N \\ 
0_N & ( W^{\mathsf R} (z))_- 
 \end{matrix} \right]
\left[ \begin{matrix} 
(W^{\mathsf L} (z))_+ & 0_N \\ 
0_N & ( W^{\mathsf R} (z))_+^{-1} 
 \end{matrix} \right]
 $}
\\
&
= 
\big(Z^{\mathsf L}_n (z) \big)_- 
\left[ \begin{matrix} 
(W^{\mathsf L} (z))_-^{-1} (W^{\mathsf L} (z))_+ & 0_N \\ 
0_N & W^{\mathsf R} (z)_- ( W^{\mathsf R} (z))_+^{-1} 
 \end{matrix} \right] .
\end{align*}
To complete the proof we only have to see that
\begin{align*}
\left(W^{\mathsf L}\right)_-=H^{\mathsf L}\Exp{ 2 \pi \ii \alpha}z^\alpha W_0^{\mathsf L},
 &&
\left(W^{\mathsf R}\right)_-=W_0^{\mathsf R}\Exp{ 2 \pi \ii \alpha}(1-z)^\beta H^{\mathsf R} ,
\end{align*}
and then check that
\begin{align*}
Z^{\mathsf R}_n (z) =
\left[ \begin{matrix}
0 & -I_N \\ I_N & 0
 \end{matrix} \right]
(Z_n^{\mathsf L} (z))^{-1}
\left[ \begin{matrix}
0 & I_N \\ -I_N & 0
 \end{matrix} \right] ,
\end{align*}
which is a consequence of~\eqref{eq:zn1}
within the definition of $Y_n^\mathsf L$ and $Y_n^\mathsf R$, cf.~\eqref{eq:ynl}.

\subsection{Structure matrix and zero curvature formula}

In parallel to the matrices $Z^{\mathsf L}_n(z)$ and $Z^{\mathsf R}_n(z)$, for each factorization we introduce what we call \emph{structure matrices} given in terms of the \emph{left}, respectively \emph{right}, logarithmic derivatives by,
\begin{align} \label{eq:Mn}
M^{\mathsf L}_n (z) & 
 := \big(Z^{\mathsf L}_n\big)^{\prime} (z) \big(Z^{\mathsf L}_{n} (z)\big)^{-1},&
{M}^{\mathsf R}_n (z) & 
 := \big ({Z}^{\mathsf R}_{n} (z)\big)^{-1} \big(Z^{\mathsf R}_n\big)^{\prime} (z) .
\end{align}
It is not difficult to see that
\begin{align} \label{MnLR}
{M}^{\mathsf R}_{n}(z) &
 = -
\left[ \begin{matrix}
0 & -I_N \\ I_N & 0
 \end{matrix} \right]
M_n^{\mathsf L} (z)
\left[ \begin{matrix}
0 & I_N \\ -I_N & 0
 \end{matrix} \right] , &
n \in \mathbb N ,
\end{align}
as well as, the following properties hold~(cf.~\cite{BFM}):
\begin{enumerate}

\item
The transfer matrices satisfy
 \begin{align*}
 T^{\mathsf L}_n (z)Z_n^{\mathsf L}(z) & = Z^{\mathsf L}_{n+1}(z) ,& {Z}^{\mathsf R}_n(z){T}^{\mathsf R}_n (z) &=
 {Z}^{\mathsf R}_{n+1}(z), & n \in \mathbb N .
 \end{align*}

\item 
The zero curvature formulas holds for all $n \in \mathbb N$,
\begin{align*}
\left[ \begin{matrix}
I_N & 0_N\\
0_N & 0_N
 \end{matrix} \right]
 & 
= M^{\mathsf L}_{n+1} (z) T^{\mathsf L}_n(z) - T^{\mathsf L}_n (z) M^{\mathsf L}_{n} (z),
 \\
\left[ \begin{matrix}
I_N & 0_N \\
0_N & 0_N
 \end{matrix} \right]
 & =
T^{\mathsf R}_n (z) \, M^{\mathsf R}_{n+1} (z) 
- M^{\mathsf R}_{n} (z) T^{\mathsf R}_n (z) .
\end{align*}

\end{enumerate}

Now, we discuss the holomorphic properties of the structure matrices just introduced.
\begin{teo} \label{prop:mn}
Let $W$ be a regular Jacobi matrix weight that satisfies a Pearson type equation~\eqref{eq:pearsonjacobi} that admits a factorization $W (z) = W^{\mathsf L} (z) W^{\mathsf R} (z)$, where $W^{\mathsf L}$ and $W^{\mathsf R}$ satisfies~\eqref{eq:pearsonjacobileft} and~\eqref{eq:pearsonjacobiright}.
Then, the structure matrices~$M^{\mathsf L}_n(z) $ and $M^\mathsf R_n(z)$ are, for each $n \in \N$, meromorphic on~$\C $, with singularities located at $z=0$ and $z=1$, which happens to be a removable singularity or a simple~pole.
\end{teo}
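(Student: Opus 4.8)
The plan is to reduce everything to $M^{\mathsf L}_n$ and then to a purely local analysis at the two endpoints. By the identity~\eqref{MnLR}, $M^{\mathsf R}_n$ is obtained from $M^{\mathsf L}_n$ by conjugation with a constant invertible matrix, so it inherits verbatim the domain of holomorphy and the order of every pole of $M^{\mathsf L}_n$; hence it suffices to treat $M^{\mathsf L}_n$. I would first show that $M^{\mathsf L}_n$ is holomorphic on $\C\setminus\{0,1\}$. Indeed, $Z^{\mathsf L}_n=Y^{\mathsf L}_n\,\diag(W^{\mathsf L},(W^{\mathsf R})^{-1})$ is holomorphic and invertible on $\C\setminus[0,+\infty)$, being a product of invertible factors, and across both $(0,1)$ and $(1,+\infty)$ it satisfies a \emph{constant} jump $(Z^{\mathsf L}_n)_+=(Z^{\mathsf L}_n)_-\,\mathsf C$ with $\mathsf C$ independent of $z$ (the constant jump conditions established before the statement). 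Differentiating and using $\mathsf C'=0_N$ gives $(M^{\mathsf L}_n)_+=(M^{\mathsf L}_n)_-$ on each interval, so $M^{\mathsf L}_n$ continues analytically across $(0,+\infty)$ and its only possible singularities in the finite plane are $z=0$ and $z=1$.

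It then remains to identify the nature of these isolated singularities; I would carry out $z=0$ in detail, the point $z=1$ being identical after exchanging the roles of $W^{\mathsf L}$, $z^{\alpha}$ with $W^{\mathsf R}$, $(1-z)^{\beta}$ and replacing (RH4) by (RH5). From the Pearson relations~\eqref{eq:pearsonjacobileft} and~\eqref{eq:pearsonjacobiright} one has $(W^{\mathsf L})'(W^{\mathsf L})^{-1}=h^{\mathsf L}/(z(1-z))$ and $((W^{\mathsf R})^{-1})'W^{\mathsf R}=-h^{\mathsf R}/(z(1-z))$, so that
\begin{align*}
M^{\mathsf L}_n(z)=(Y^{\mathsf L}_n)'(z)\,(Y^{\mathsf L}_n(z))^{-1}+\frac1{z(1-z)}\,Y^{\mathsf L}_n(z)\begin{bmatrix}h^{\mathsf L}(z)&0_N\\0_N&-h^{\mathsf R}(z)\end{bmatrix}(Y^{\mathsf L}_n(z))^{-1}.
\end{align*}
The factor $1/(z(1-z))$ displays the expected simple pole, and the whole content of the theorem is that conjugating the \emph{entire} matrix $\diag(h^{\mathsf L},-h^{\mathsf R})$ by $Y^{\mathsf L}_n$, together with the logarithmic derivative $(Y^{\mathsf L}_n)'(Y^{\mathsf L}_n)^{-1}$, does not raise the pole order at $z=0$ or $z=1$.

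To control this I would pass to the constant jump viewpoint. Since $z(1-z)(W^{\mathsf L})'=h^{\mathsf L}W^{\mathsf L}$ has a regular singular point at $0$, Wasow's theory~\cite{wasow} gives $W^{\mathsf L}(z)=G(z)z^{A}$ near $0$ with $G$ holomorphic and invertible and $A$ a constant matrix, while $(W^{\mathsf R})^{-1}$ is already holomorphic and invertible at $0$ (its only branch point is $z=1$). Consequently $Z^{\mathsf L}_n(z)=\Phi(z)\,z^{\Gamma}$ for a constant $\Gamma$ fixed by the local monodromy $\mathsf C_0$ and a single-valued $\Phi$, whence
\begin{align*}
M^{\mathsf L}_n(z)=\Phi'(z)\Phi(z)^{-1}+\frac1z\,\Phi(z)\,\Gamma\,\Phi(z)^{-1}.
\end{align*}
The key point to establish is that $\Phi$ and $\Phi^{-1}$ stay bounded as $z\to0$, i.e. that the singularity of $\Phi$ is removable with $\Phi(0)$ invertible. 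This is exactly where the sharp endpoint decay of (RH4) enters: it forces the $o(1/z)$ block-column of $Z^{\mathsf L}_n$ coming from the second kind functions to match the prescribed power $z^{\Gamma}$ rather than to blow up faster. Granting this, $\Phi'\Phi^{-1}$ is holomorphic at $0$ and the remaining term has at most a simple pole with residue $\Phi(0)\Gamma\Phi(0)^{-1}$, which becomes a removable singularity precisely when that residue vanishes.

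The main obstacle is this boundedness of $\Phi$. A naive entrywise estimate is not enough: using (RH4) alone one can only bound $(Y^{\mathsf L}_n)'(Y^{\mathsf L}_n)^{-1}$ and the conjugated Pearson term individually by $o(1/z^{2})$, so the simple-pole conclusion genuinely rests on the cancellation imposed by the single-valuedness proved in the first step together with the precise local factorization $W^{\mathsf L}=G\,z^{A}$ of~\eqref{eq:casoparticular}. A secondary technical nuisance is that $\Gamma$ (equivalently $A$) need not be diagonalizable; this is harmless because $z^{\Gamma}=\Exp{\Gamma\log z}$ absorbs the ensuing logarithmic terms while the logarithmic derivative still contributes only a simple pole. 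Running the same analysis at $z=1$ with (RH5) and $(1-z)^{\beta}$, and transporting the conclusion to $M^{\mathsf R}_n$ through~\eqref{MnLR}, then completes the proof.
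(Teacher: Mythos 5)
Your opening step — reducing $M^{\mathsf R}_n$ to $M^{\mathsf L}_n$ via \eqref{MnLR} and using the constancy of the jump of $Z^{\mathsf L}_n$ to cancel it in the logarithmic derivative, so that $M^{\mathsf L}_n$ is holomorphic on $\C\setminus\{0,1\}$ with isolated singularities at the endpoints — is correct and is exactly the paper's first step. The gap is in the local analysis. You factor $Z^{\mathsf L}_n(z)=\Phi(z)\,z^{\Gamma}$ near $z=0$ and reduce everything to the claim that $\Phi$ extends holomorphically to $0$ with $\Phi(0)$ invertible, but you then write ``granting this'' and label it ``the main obstacle'' without supplying an argument. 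That claim is not a technicality: single-valuedness of $\Phi$ (which is all the monodromy matching gives) is compatible with a pole or an essential singularity at $0$, and if $\Phi$ blows up or $\det\Phi$ vanishes there, neither $\Phi'\Phi^{-1}$ nor $z^{-1}\Phi\Gamma\Phi^{-1}$ need have only a simple pole. Proving removability requires two-sided growth control of $Y^{\mathsf L}_n$, $(Y^{\mathsf L}_n)^{-1}$ and of $z^{\pm\Gamma}$ (whose entries behave like $z^{\pm\lambda}\log^{k}z$ over the eigenvalues $\lambda$ of $\Gamma$), and that input is essentially as strong as the conclusion of the theorem; it is precisely what is missing. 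A secondary slip: in the generality of \eqref{eq:pearsonjacobileft}--\eqref{eq:pearsonjacobiright} the factor $W^{\mathsf R}$ also has a regular singular point at $z=0$ (cf.~\eqref{eq:salto0}), so your assertion that $(W^{\mathsf R})^{-1}$ is holomorphic and invertible at $0$ is only valid for the special factorization \eqref{eq:partial_Pearson1}.

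The paper closes this gap by direct estimates rather than a local parametrix. Starting from
\begin{equation*}
M^{\mathsf L}_n=(Y^{\mathsf L}_n)'(Y^{\mathsf L}_n)^{-1}+\tfrac{1}{z(z-1)}\,Y^{\mathsf L}_n\diag(h^{\mathsf L},-h^{\mathsf R})(Y^{\mathsf L}_n)^{-1},
\end{equation*}
it invokes Gakhov and Muskhelishvili for the endpoint behaviour $\lim_{z\to0}z\int_0^1 f(t)(t-z)^{-1}\,\d t=0_N$ of Cauchy transforms of Jacobi-type densities, and then the integration-by-parts identity
\begin{equation*}
z(1-z)\Bigl(\int_0^1\frac{f(t)}{t-z}\,\d t\Bigr)'=-\int_0^1 f(t)\,\d t+\int_0^1\frac{t(1-t)f'(t)}{t-z}\,\d t,
\end{equation*}
whose last term is again a Cauchy transform of a density of the same class, to obtain $z^2\bigl(\int_0^1 f(t)(t-z)^{-1}\d t\bigr)'\to0_N$. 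Feeding the resulting $\operatorname{o}$-estimates for the second block column of $Y^{\mathsf L}_n$, $(Y^{\mathsf L}_n)'$ and the first block row of $(Y^{\mathsf L}_n)^{-1}$ into the two terms above yields $\lim_{z\to0}z^{2}M^{\mathsf L}_n(z)=0_{2N}$ and $\lim_{z\to1}(1-z)^{2}M^{\mathsf L}_n(z)=0_{2N}$, which, combined with the isolatedness of the singularities from your first step, forces at most a simple pole. If you wish to keep the parametrix route you must actually prove the invertibility of $\Phi(0)$; the most economical repair is to replace that step by these Cauchy-transform estimates.
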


\begin{proof}
Let us prove the statement for $M^{\mathsf L}_n(z) $.
The matrix function $M^{\mathsf L}_n(z) $ is holomorphic in $ \C \setminus
[0,+\infty)$ by definition,~cf.~\eqref{eq:Mn}.
Due to the fact that $Z^{\mathsf L}_n(z)$ has a constant jump on
$(0,1) \cup (1,+\infty)$,~cf.~\eqref{eq:salto0} and~\eqref{eq:salto1}, the matrix function $\displaystyle \big(Z^{\mathsf L}_n\big)^{\prime} $ has the same constant jump on $(0,1) \cup (1,+\infty)$, so that the matrix $M^{\mathsf L}_n(z) $ has no jump on $(0,1) \cup (1,+\infty)$, and it follows that
at $z=0$ and $z=1$, $M^{\mathsf L}_n(z) $ has an isolated singularity. 

From~\eqref{eq:zn1} and~\eqref{eq:Mn} it holds
\begin{align}
\label{eq:numerar}
M^{\mathsf L}_n (z)
&= \big(Z^{\mathsf L}_n\big)^{\prime} (z)\big({Z^{\mathsf L}_n} (z) \big)^{-1}
 \\
\nonumber
& = \big(Y^{\mathsf L}_n\big)^{\prime} (z)\big( {Y^{\mathsf L}_n} (z) \big)^{-1} 
+ \frac{1}{z(z-1)} Y^{\mathsf L}_n (z)
\left[ \begin{matrix}
h^{\mathsf L}(z) & 0_N\\
0_N & -h^{\mathsf R} (z)
 \end{matrix} \right]
\big( {Y^{\mathsf L}_n} (z)\big)^{-1}, 
\end{align}
where
$Y^{\mathsf L}_n$ is given in~\eqref{eq:ynl}.
Each entry of the matrix $Q^{\mathsf L}_{n} (z)$ is the Cauchy transform of certain function,~$f$, of type
\begin{align*}
f(z)=\sum_{j \in I} \varphi_{j}(z) z^{\alpha_{j}}(1-z)^{\beta_j} \log ^{p_{j}} (z) \log ^{q_{j}} (1-z),
\end{align*}
where $\varphi_{j} (z)$ is, for each $j \in I$, an entire function with $\Re(\alpha_{j})$, $\Re(\beta_{j})>-1$,
$ p_{j}$, $q_j \in \mathbb{N}$, and $I$ is a finite set of indices.
It's clear that
\begin{align*}
\lim_{z \to 0} z f(z)=0_N
&&
\text{and}
&&
\lim_{z \to 1}(1-z)f(z)=0_N
.
\end{align*}
By~\cite[\S 8.3-8.6]{gakhov} and~\cite{Muskhelishvili}, we deduce that the Cauchy transform of $f$ have the same properties:
\begin{align} \label{demon1}
\lim_{z \to 0} 
z \int_0^1 \frac{f(t)}{t-z}\, \d t =0_N
&&
\text{and}
&&
\lim_{z \to 1}
(1-z) \int_0^1 \frac{f(t)}{t-z} \, \d t =0_N .
\end{align}
Now, we will prove that
\begin{align}\label{demon2}
\lim_{z \to 0}
z^2 \left( \int_0^1 \frac{f(t)}{t-z}\, \d t\right)^\prime=0_N
 && \text{and} &&
\lim_{z \to 1}
(1-z)^2\left( \int_0^1 \frac{f(t)}{t-z} \, \d t\right)^\prime=0_N
 .
\end{align}
In fact,
\begin{align*}
& z(1-z) \left( \int_0^1 \frac{f(t)}{t-z}\, \d t \right)^\prime 
 = \int_{0}^1 \frac{z(1-z) f(t)}{(t-z)^{2}} \operatorname{d} t
\\
 & \phantom{olaolaolaol}
=\int_{0}^1 \frac{(t-z)(t+z-1) f(t)}{(t-z)^{2}} \operatorname{d} t
+\int_{0}^1 \frac{t(1-t) f(t)}{(t-z)^{2}} \operatorname{d} t, \\
 & \phantom{olaolaolaol}
=
\int_{0}^1 \frac{t+z-1}{t-z}f(t) \operatorname{d} t-
\left.\frac{t(1-t) f(t)}{t-z}\right|_{0}^1+
\int_{0}^1 \frac{\left(t(1-t) f(t)\right)^{\prime}}{t-z} \operatorname{d} t .
\end{align*}
From the boundary conditions, the first term is zero and we get
\begin{align*}
z (1-z)\left( \int_0^1 \frac{f(t)}{t-z}\, \d t\right)^\prime
=-\int_{0}^1 f(t) \operatorname{d}t+\int_{0}^1 \frac{t(1-t) f^{\prime}(t)}{t-z} \operatorname{d}t .
\end{align*}
We return back to~\eqref{demon2}, and see that this is equivalent to prove that
\begin{align*}
\lim_{z \to 0}
z^2 (1-z) \left( \int_0^1 \frac{f(t)}{t-z}\, \d t\right)^\prime
= 0_N .
\end{align*}
This follows from the fact that the Stieltjes transform of $z(1-z)f^\prime(z)$, i.e.
\begin{align*}
\int_{0}^1 \frac{t(1-t) f^{\prime}(t)}{t-z} \operatorname{d}t ,
\end{align*}
is of the same type of $f$.
Then,
\begin{align*}
\lim_{z\to 0} z\left( \int_0^1 \frac{ t(1-t)f^\prime(t)}{t-z}\, \d t \right)
 =0_N, &&
\lim_{z\to 1} (1-z)\left( \int_0^1 \frac{t(1-t)f^\prime(t)}{t-z} \, \d t\right)
 =0_N ,
\end{align*}
and~\eqref{demon2} follows.

Now, as each entry of the matrix $Q_{n}^{\mathsf L}(z)$ is a Cauchy transform of certain function~$f$ described previously, by using~\eqref{demon1} and~\eqref{demon2} we have that,
\begin{align*} 
\left(Y_{n}^{\mathsf L}\right)^{\prime}(z)
=\left[\begin{matrix}
\operatorname{O}(1) & \operatorname{o}(\frac{1}{z^2}) \\
\operatorname{O}(1) & \operatorname{o}(\frac{1}{z^2})
\end{matrix}\right], 
 &&
\left(Y_{n}^{\mathsf L}(z)\right)^{-1}
=\left[\begin{matrix}
\operatorname{o}(\frac{1}{z}) & \operatorname{o}(\frac{1}{z}) \\
\operatorname{O}(1) & \operatorname{O}(1)
\end{matrix}\right],
&& z \to 0
\end{align*}
and 
\begin{align*}
\left(Y_{n}^{\mathsf L}\right)^{\prime}(z)
=\left[\begin{matrix}
\operatorname{O}(1) & \operatorname{o}(\frac{1}{(1-z)^2}) \\
\operatorname{O}(1) & \operatorname{o}(\frac{1}{(1-z)^2})
\end{matrix}\right], &&
\left(Y_{n}^{\mathsf L}(z)\right)^{-1}=\left[\begin{matrix}
\operatorname{o}(\frac{1}{1-z}) & \operatorname{o}(\frac{1}{1-z}) \\
\operatorname{O}(1) & \operatorname{O}(1)
\end{matrix}\right], &&
 z \to 1 .
\end{align*}
This implies that
\begin{align*}
\lim_{z \to 0}
z^{2} \left(Y_{n}^{\mathsf L}\right)^{\prime}(z)
\left(Y_{n}^{\mathsf L}\right)^{-1}
 & =\lim_{z \to 0} z^{2}
\left[\begin{matrix}
 \operatorname{o}(\frac{1}{z})+ \operatorname{o}(\frac{1}{z^2}) &\operatorname{o}(\frac{1}{z})+\operatorname{o}(\frac{1}{z^2}) \\
 \operatorname{o}(\frac{1}{z^2})+\operatorname{o}(\frac{1}{z}) & \operatorname{o}(\frac{1}{z^2})+ \operatorname{o}(\frac{1}{z})
\end{matrix}\right]
 \\ & 
=\lim_{z \to 0} z^{2}
\left[\begin{matrix}
 \operatorname{o}(\frac{1}{z^2}) &\operatorname{o}(\frac{1}{z^2}) \\
\operatorname{o}(\frac{1}{z^2}) &\operatorname{o}(\frac{1}{z^2})
\end{matrix}\right]
=0_{2 N} ,
\end{align*}
and
\begin{align*} 
\lim_{z \to 1} (1-z)^{2}\left(Y_{n}^{\mathsf L}\right)^{\prime}(z)\left(Y_{n}^{\mathsf L}\right)^{-1}=\lim_{z \to 1} (1-z)^{2}\left[\begin{matrix}
 \operatorname{o}(\frac{1}{(1-z)^2}) &\operatorname{o}(\frac{1}{(1-z)^2}) \\
\operatorname{o}(\frac{1}{(1-z)^2}) &\operatorname{o}(\frac{1}{(1-z)^2})
\end{matrix}
\right]
 = 0_{2 N} .
\end{align*}
Straightforward calculation and similar considerations lead us to
\begin{align*}
\lim_{z \to 0} z Y_{n}^{\mathsf L}(z)
\left[\begin{matrix}
h^{\mathsf L}(z) & 0_{N} \\
0_{N} & -h^{\mathsf R}(z)
\end{matrix}
\right]
\left(Y_{n}^{\mathsf L}(z)\right)^{-1} & =0_{2 N},
 \\
\lim_{z \to 1} (1-z) Y_{n}^{\mathsf L}(z)
\left[\begin{matrix}
h^{\mathsf L}(z) & 0_{N} \\
0_{N} & -h^{\mathsf R}(z)
\end{matrix}
\right]
\left(Y_{n}^{\mathsf L}(z)\right)^{-1} & =0_{2 N} .
\end{align*} 
Finally we arrive to
\begin{align*}
\lim_{z \to 0} z^{2} M_{n}^{\mathsf L}(z)=0_{2 N}
&&
\text{and}
&&
\lim_{z \to 1} (1-z)^{2} M_{n}^{\mathsf L}(z)=0_{2 N} .
\end{align*}
By analogous arguments we get the results for $M_n^{\mathsf R}$.
\end{proof}

\section{Differential relations from the Riemann--Hilbert problem} \label{sec:4}

Our objective is to derive differential equations satisfied by the biorthogonal matrix polynomials associated to regular Jacobi type matrices of weights.
Here we use the Riemann--Hilbert problem approach in order to derive these differential relations.

Let us define a new matrix functions,
\begin{align*}
\tilde{M}^\mathsf L_n (z)& = z(1-z){M}^\mathsf L_n (z) , &\tilde{M}^\mathsf R_n (z)& = z(1-z){M}^\mathsf R_n (z) ,
\end{align*}
then $\tilde{M}^\mathsf L_n (z)$ and $\tilde{M}^\mathsf R_n (z)$ are matrices of entire functions, cf. Theorem~\ref{prop:mn}.

\begin{pro}[First order differential equation for the fundamental matrices] \label{prop:mnnn}
In the conditions of Theorem~\ref{prop:mn} we have that 
\begin{align}
\label{eq:firstodeYnL}
z(1-z) \big(Y^{\mathsf L}_n\big)^{\prime} (z) + Y^{\mathsf L}_n (z)
\left[ \begin{matrix}
h^{\mathsf L} (z) & 0_N \\ 0_N & - h^{\mathsf R} (z)
 \end{matrix} \right] 
&
= \tilde{M}^{\mathsf L}_n (z) Y^{\mathsf L}_n (z)
 \\
\label{eq:firstodeYnR}
z(1-z) \big(Y^{\mathsf R}_n\big)^{\prime} (z)
 +
\left[ \begin{matrix}
h^{\mathsf R} (z) & 0_N \\ 0_N & - h^{\mathsf L} (z)
 \end{matrix} \right] 
Y^{\mathsf R}_n (z)
&
= Y^{\mathsf R}_n (z) \tilde{M}^{\mathsf R}_n (z) .
\end{align}
\end{pro}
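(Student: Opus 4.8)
The plan is to read the first-order equation straight off the factorization of $Z^{\mathsf L}_n$ combined with the logarithmic-derivative identity~\eqref{eq:numerar}. Recall from~\eqref{eq:zn1} that $Z^{\mathsf L}_n(z)=Y^{\mathsf L}_n(z)\,\mathcal W(z)$ with $\mathcal W(z):=\left[\begin{smallmatrix} W^{\mathsf L}(z) & 0_N \\ 0_N & (W^{\mathsf R}(z))^{-1}\end{smallmatrix}\right]$, and that by definition~\eqref{eq:Mn} one has $M^{\mathsf L}_n=(Z^{\mathsf L}_n)'(Z^{\mathsf L}_n)^{-1}$. Applying the product rule and $(Z^{\mathsf L}_n)^{-1}=\mathcal W^{-1}(Y^{\mathsf L}_n)^{-1}$ gives $M^{\mathsf L}_n=(Y^{\mathsf L}_n)'(Y^{\mathsf L}_n)^{-1}+Y^{\mathsf L}_n\,(\mathcal W'\mathcal W^{-1})\,(Y^{\mathsf L}_n)^{-1}$, which recovers~\eqref{eq:numerar} once the block-diagonal factor $\mathcal W'\mathcal W^{-1}$ is identified. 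The remainder of the argument is then purely a matter of clearing the rational prefactor: I multiply this relation by $z(1-z)$ on the left and by $Y^{\mathsf L}_n$ on the right.

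The one computation that must be carried out carefully is $\mathcal W'\mathcal W^{-1}$, since the factors do not commute. On the top-left block the left Pearson equation~\eqref{eq:pearsonjacobileft} gives $(W^{\mathsf L})'(W^{\mathsf L})^{-1}=\tfrac{1}{z(1-z)}h^{\mathsf L}$. On the bottom-right block I would write $((W^{\mathsf R})^{-1})'=-(W^{\mathsf R})^{-1}(W^{\mathsf R})'(W^{\mathsf R})^{-1}$ and then feed in the right Pearson equation~\eqref{eq:pearsonjacobiright} in the form $(W^{\mathsf R})^{-1}(W^{\mathsf R})'=\tfrac{1}{z(1-z)}h^{\mathsf R}$, so that $((W^{\mathsf R})^{-1})'\,W^{\mathsf R}=-\tfrac{1}{z(1-z)}h^{\mathsf R}$. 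Hence $\mathcal W'\mathcal W^{-1}=\tfrac{1}{z(1-z)}\left[\begin{smallmatrix} h^{\mathsf L} & 0_N \\ 0_N & -h^{\mathsf R}\end{smallmatrix}\right]$. Multiplying~\eqref{eq:numerar} by $z(1-z)$ turns the first summand into $z(1-z)(Y^{\mathsf L}_n)'(Y^{\mathsf L}_n)^{-1}$ and the rational prefactor into $1$; then recalling $\tilde M^{\mathsf L}_n=z(1-z)M^{\mathsf L}_n$ and right-multiplying by $Y^{\mathsf L}_n$ produces exactly~\eqref{eq:firstodeYnL}. Theorem~\ref{prop:mn} guarantees that $\tilde M^{\mathsf L}_n$ is entire, so this is a genuine polynomial-coefficient differential relation rather than one carrying spurious poles.

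For the right equation~\eqref{eq:firstodeYnR} there are two equivalent routes. The direct one repeats the argument verbatim from $Z^{\mathsf R}_n=\left[\begin{smallmatrix} W^{\mathsf R} & 0_N \\ 0_N & (W^{\mathsf L})^{-1}\end{smallmatrix}\right]Y^{\mathsf R}_n$ and $M^{\mathsf R}_n=(Z^{\mathsf R}_n)^{-1}(Z^{\mathsf R}_n)'$, now accumulating the weight factor on the left so the block matrix $\diag(h^{\mathsf R},-h^{\mathsf L})$ ends up multiplying $Y^{\mathsf R}_n$ from the left. The economical one conjugates~\eqref{eq:firstodeYnL} by the constant matrices appearing in the inversion formula stated after~\eqref{eq:ynl} and applies~\eqref{MnLR}: this swaps the two diagonal blocks and, together with the sign carried by~\eqref{MnLR}, turns $\diag(h^{\mathsf L},-h^{\mathsf R})$ into $\diag(h^{\mathsf R},-h^{\mathsf L})$ while reversing the order of the factors, which is precisely~\eqref{eq:firstodeYnR}. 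I expect the only real obstacle to be bookkeeping in this non-commutative setting: keeping each factor on the correct side throughout, in particular tracking the sign produced by differentiating $(W^{\mathsf R})^{-1}$ and the sign in~\eqref{MnLR}. The analytic content, namely that $\tilde M^{\mathsf L}_n$ and $\tilde M^{\mathsf R}_n$ are entire, is already furnished by Theorem~\ref{prop:mn}.
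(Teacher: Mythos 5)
Your proof is correct and is essentially the paper's own argument: the paper simply states that the identities follow immediately from the definition of $M^{\mathsf L}_n$ and $M^{\mathsf R}_n$ in~\eqref{eq:Mn}, and your computation of $\mathcal W'\mathcal W^{-1}$ via the Pearson equations, followed by clearing the factor $z(1-z)$, is exactly the omitted calculation (it also reproduces~\eqref{eq:numerar}, up to the sign typo $z(z-1)$ versus $z(1-z)$ there). Both of your routes to~\eqref{eq:firstodeYnR} are sound.
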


\begin{proof}
Equations~\eqref{eq:firstodeYnL} and~\eqref{eq:firstodeYnR} follows immediately from the definition of the matrices~$M^{\mathsf L}_n(z)$ and $M^{\mathsf R}_n(z)$ in~\eqref{eq:Mn}. 
\end{proof}

\begin{pro}
In the conditions of Theorem~\ref{prop:mn}. If $h^{\mathsf L} (z)=A^{\mathsf L} z+B^{\mathsf L}$
and $ h^{\mathsf R}(z)=A^{\mathsf R} z+B^{\mathsf R} $, then the left and right fundamental matrices are given respectively by,
\begin{align}
\label{generalMnL}
 \tilde{M}^{\mathsf L}_n (z) & = 
\resizebox{.805\hsize}{!}{$
 \left[ \begin{matrix} 
\left(A^{\mathsf L}-nI_N\right) z +[ p^1_{\mathsf L,n}, A^{\mathsf L} ] + p^1_{\mathsf L,n}+ n I_N + B^{\mathsf L}
 &
A^{\mathsf L} C_n^{-1} + C_n^{-1} A^{\mathsf R}-(2n+1)C_{n}^{-1}
 \\[.15cm]
-C_{n-1} A^{\mathsf L} -A^{\mathsf R} C_{n-1}+(2n-1)C_{n-1}
&
\left(nI_N-A^{\mathsf R}\right) z + [ p^1_{\mathsf R,n} ,A^{\mathsf R} ]- p^1_{\mathsf R,n} - n I_N- B^{\mathsf R}
 \end{matrix} \right] 
 $},
 \\
\label{generalMnR}
\tilde{M}^{\mathsf R}_n (z) & =
\resizebox{.805\hsize}{!}{$
\left[ \begin{matrix}
\left(A^{\mathsf R}-nI_N\right) z - [ p^1_{\mathsf R,n} ,A^{\mathsf R} ]+ p^1_{\mathsf R,n} + n I_N+ B^{\mathsf R} &-C_{n-1} A^{\mathsf L} -A^{\mathsf R} C_{n-1}+(2n-1)C_{n-1} \\[.15cm]
 A^{\mathsf L} C_n^{-1} + C_n^{-1} A^{\mathsf R}-(2n+1)C_{n}^{-1}
 & \left(nI_N-A^{\mathsf L}\right) z -[ p^1_{\mathsf L,n}, A^{\mathsf L} ] - p^1_{\mathsf L,n}- n I_N - B^{\mathsf L} 
 \end{matrix} \right]
$} .
\end{align} 
\end{pro}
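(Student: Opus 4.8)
The plan is to extract $\tilde M^{\mathsf L}_n$ from the first order relation~\eqref{eq:firstodeYnL}, which can be rewritten as
\begin{align*}
\tilde M^{\mathsf L}_n(z)=\left(z(1-z)\big(Y^{\mathsf L}_n\big)^{\prime}(z)+Y^{\mathsf L}_n(z)\left[ \begin{matrix} h^{\mathsf L}(z) & 0_N\\ 0_N & -h^{\mathsf R}(z)\end{matrix} \right]\right)\big(Y^{\mathsf L}_n(z)\big)^{-1},
\end{align*}
and then to identify the right hand side as an explicit affine function of $z$. By Theorem~\ref{prop:mn} the matrix $\tilde M^{\mathsf L}_n=z(1-z)M^{\mathsf L}_n$ is entire, so the first task is only to show it has degree one: I would expand the right hand side at infinity and check that it equals $\mathcal A_n z+\mathcal B_n+\operatorname{O}(1/z)$ for constant matrices $\mathcal A_n,\mathcal B_n$. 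An entire function of this form must equal $\mathcal A_n z+\mathcal B_n$, since the remainder is entire and vanishes at $\infty$, hence is $0_{2N}$ by Liouville; everything then reduces to computing the two leading coefficients.

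For the expansion I would use~\eqref{eq:ynl} together with the series for $P^{\mathsf L}_n,P^{\mathsf L}_{n-1},Q^{\mathsf L}_n,Q^{\mathsf L}_{n-1}$ to write $Y^{\mathsf L}_n(z)=F(z)D(z)$ with $D(z)=\diag(z^nI_N,z^{-n}I_N)$ and $F(z)=I_{2N}+Y_1z^{-1}+Y_2z^{-2}+\cdots$, where
\begin{align*}
Y_1=\left[ \begin{matrix} p^1_{\mathsf L,n} & -C_n^{-1}\\ -C_{n-1} & q^1_{\mathsf L,n-1}\end{matrix} \right].
\end{align*}
The key simplification is that conjugation by $D$ fixes any block-diagonal constant (or $z$-dependent) matrix, because $z^{\pm n}$ are scalars; hence the potential term collapses to $F(z)\diag(h^{\mathsf L}(z),-h^{\mathsf R}(z))F(z)^{-1}$, while the derivative term equals $z(1-z)\big(F'F^{-1}+\tfrac{n}{z}F\Sigma F^{-1}\big)$ with $\Sigma=\diag(I_N,-I_N)$, using $D'D^{-1}=\tfrac{n}{z}\Sigma$. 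Expanding both with $F=I_{2N}+Y_1/z+\cdots$ and retaining orders $z^1$ and $z^0$ is then routine: the coefficient of $z$ is $\diag(A^{\mathsf L}-nI_N,\,nI_N-A^{\mathsf R})$, and the constant coefficient is
\begin{align*}
\mathcal B_n=n\Sigma-n[Y_1,\Sigma]+Y_1+\big[Y_1,\diag(A^{\mathsf L},-A^{\mathsf R})\big]+\diag(B^{\mathsf L},-B^{\mathsf R}).
\end{align*}

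Evaluating $\mathcal B_n$ blockwise reproduces the $(1,1)$, $(1,2)$ and $(2,1)$ entries of~\eqref{generalMnL} immediately. The one entry that does not yet match is $(2,2)$, which comes out in terms of $q^1_{\mathsf L,n-1}$ rather than $p^1_{\mathsf R,n}$; to convert it I would use the inverse relation $(Y^{\mathsf L}_n)^{-1}=J\,Y^{\mathsf R}_n\,J^{-1}$ stated in Section~\ref{sec:2}, with $J=\begin{bsmallmatrix} 0_N & I_N\\ -I_N & 0_N\end{bsmallmatrix}$. Since $J$ conjugates $D$ into $D^{-1}$, writing $Y^{\mathsf R}_n=D\tilde F$ with $\tilde F=I_{2N}+\tilde Y_1/z+\cdots$ forces $F^{-1}=J\tilde FJ^{-1}$, and comparing the $1/z$ coefficients gives $-Y_1=J\tilde Y_1 J^{-1}$. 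Reading the diagonal blocks, where $(\tilde Y_1)_{11}=p^1_{\mathsf R,n}$ and $(\tilde Y_1)_{22}=q^1_{\mathsf R,n-1}$, yields the biorthogonality identity $q^1_{\mathsf L,n-1}=-p^1_{\mathsf R,n}$; substituting it turns the $(2,2)$ entry into $[p^1_{\mathsf R,n},A^{\mathsf R}]-p^1_{\mathsf R,n}-nI_N-B^{\mathsf R}$ and completes~\eqref{generalMnL}.

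Finally,~\eqref{generalMnR} follows without further computation from the symmetry~\eqref{MnLR}: multiplying it by $z(1-z)$ gives $\tilde M^{\mathsf R}_n=-\begin{bsmallmatrix} 0_N & -I_N\\ I_N & 0_N\end{bsmallmatrix}\tilde M^{\mathsf L}_n\begin{bsmallmatrix} 0_N & I_N\\ -I_N & 0_N\end{bsmallmatrix}$, and this constant conjugation merely permutes and signs the blocks of~\eqref{generalMnL} into those of~\eqref{generalMnR}. The main obstacle I anticipate is the constant ($z^0$) term: because $z(1-z)$ multiplies a series beginning at order $1/z$, the $1/z^2$ coefficient of $(Y^{\mathsf L}_n)^{\prime}(Y^{\mathsf L}_n)^{-1}$ feeds into $\mathcal B_n$, so the commutators $[Y_1,\cdot]$ must be tracked with care; the replacement of $q^1_{\mathsf L,n-1}$ by $-p^1_{\mathsf R,n}$ through the inverse relation is the only genuinely non-mechanical step.
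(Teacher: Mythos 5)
Your proposal is correct and follows essentially the same route as the paper: expand $z(1-z)\big(Y^{\mathsf L}_n\big)'\big(Y^{\mathsf L}_n\big)^{-1}$ and $Y^{\mathsf L}_n\operatorname{diag}(h^{\mathsf L},-h^{\mathsf R})\big(Y^{\mathsf L}_n\big)^{-1}$ at infinity, invoke Liouville to kill the $\operatorname{O}(1/z)$ tail, and obtain~\eqref{generalMnR} from~\eqref{MnLR}. Your factorization $Y^{\mathsf L}_n=FD$ is a tidier bookkeeping device than the paper's entrywise expansion, and you additionally justify the identities $q^1_{\mathsf L,n-1}=-p^1_{\mathsf R,n}$, $q^1_{\mathsf R,n-1}=-p^1_{\mathsf L,n}$ (which the paper simply asserts) via the inverse relation $\big(Y^{\mathsf L}_n\big)^{-1}=J\,Y^{\mathsf R}_n\,J^{-1}$; all blocks check out against~\eqref{generalMnL}.
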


\begin{proof}
Taking $\vert z\vert \to +\infty $ in~\eqref{eq:numerar} we have that
\begin{align*}
Y_n^\mathsf{L}
=
\resizebox{.805\hsize}{!}{$
\left[ \begin{matrix}
I_{N} z^{n}+p_{\mathsf{L}, n}^{1} z^{n-1}+\cdots & -C_{n}^{-1}\left(I_{N} z^{-n-1}+q_{\mathsf{L}, n}^{1} z^{-n-2}+\cdots\right) \\[.15cm]
-C_{n-1}\left(I_{N} z^{n-1}+p_{\mathsf{L}, n-1}^{1} z^{n-2}+\cdots\right) & I_{N} z^{-n-2}+q_{\mathsf{L}, n-1}^{1} z^{-n-3}+\cdots
\end{matrix}
\right]
$}
 ,
 \\
\left(Y_n^\mathsf{L}\right)^{-1}
=
\resizebox{.805\hsize}{!}{$
\left[ \begin{matrix}
I_{N} z^{-n-2}+q_{\mathsf{R}, n-1}^{1} z^{-n-3}+\cdots 
&\left(I_{N} z^{-n-1}+q_{\mathsf{R}, n}^{1} z^{-n-2}+\cdots\right) C_{n}^{-1} \\
 \left(I_{N} z^{n-1}+p_{\mathsf{R}, n-1}^{1} z^{n-2}+p_{\mathsf{R}, n-1}^{2} z^{n-3}+\cdots \right)C_{n-1} & I_{N} z^{n}+p_{\mathsf{R}, n}^{1} z^{n-1}+p_{\mathsf{R}, n}^{2} z^{n-2}+\cdots
\end{matrix}\right]
$} 
 .
\end{align*}
Hence, as $\vert z\vert \to +\infty $
\begin{multline*}
z(1-z)\left(Y_{n}^{\mathsf L}\right)^{\prime}\left(Y_{n}^{\mathsf L}\right)^{-1}
 \\
=
\resizebox{.75\hsize}{!}{$
\left[ \begin{matrix}
-nI_Nz+nI_N-(nq_{\mathsf{R}, n-1}^{1}+(n-1) p_{\mathsf{L}, n}^{1}) & -(2n+1)C_{n}^{-1}\\
(2n-1)C_{n-1} & nI_Nz-nI_N +np_{\mathsf{R}, n}^{1}+(n+1)q_{\mathsf{L}, n-1}^{1}
\end{matrix}\right]
$}
+ \operatorname O({1}/{z}) .
\end{multline*}
Since $z\mapsto z(1-z)\left(Y_{n}^{\mathsf L}\right)^{\prime}(z)\left(Y_{n}^{\mathsf L}(z)\right)^{-1}$ is holomorphic over $\mathbb{C}$, by Liouville theorem we deduce that,
\begin{multline*}
z(1-z)\left(Y_{n}^{\mathsf L}\right)^{\prime}(z)\left(Y_{n}^{\mathsf L}(z)\right)^{-1}
 \\ =
\left[ \begin{matrix}
-nI_Nz+nI_N-(n q_{\mathsf{R}, n-1}^{1}+(n-1) p_{\mathsf{L}, n}^{1}) & -(2n+1)C_{n}^{-1}\\
(2n-1)C_{n-1} & nI_Nz-nI_N +np_{\mathsf{R}, n}^{1}+(n+1)q_{\mathsf{L}, n-1}^{1}
\end{matrix}\right] .
\end{multline*}
Using again Liouville's theorem we get,
\begin{multline*}
Y_{n}^{\mathsf L}(z)
\left[
\begin{matrix}
h^{\mathsf L}(z) & 0_{N} \\
0_{N} & -h^{\mathsf R}(z)
\end{matrix}\right]\left(Y_{n}^{\mathsf L}(z)\right)^{-1}
 \\ =
\left[\begin{matrix}
A^{\mathsf L}z+A^{\mathsf L}q_{\mathsf R,n-1}^1+p_{\mathsf L,n}^1A^{\mathsf L}+B^{\mathsf L} & A^{\mathsf L}C_{n}^{-1}-C_{n}^{-1}A^{\mathsf R}\\
-C_{n-1}A^{\mathsf L}+A^{\mathsf R}C_{n-1}& -A^{\mathsf R}z-A^{\mathsf R}p_{\mathsf R,n}^1-q_{\mathsf L,n-1}^1A^{\mathsf R}-B^{\mathsf R}
\end{matrix}\right] .
\end{multline*}
By considering the identities $p^1_{\mathsf R,n} = -q^1_{\mathsf L,n-1}$ and $p^1_{\mathsf L,n} = -q^1_{\mathsf R,n-1}$, then~\eqref{generalMnL} follows. The relation~\eqref{MnLR} leads to~\eqref{generalMnR}.
\end{proof}

Now, we introduce the $\mathcal N $ map, 
$
\displaystyle 
\mathcal N (F(z)) = F^{\prime}(z)+\frac{F^2(z)}{z(1-z)}
$.

\begin{pro}[Second order differential equation for the fundamental matrices] \label{prop:mnn}
In the conditions of Theorem~\ref{prop:mn} we have that
\begin{align} \label{eq:edo1}
\resizebox{.9095\hsize}{!}{$
z(1-z) \big(Y^\mathsf L_n\big)^{\prime\prime} 
+ \big(Y^\mathsf L_n\big)^{\prime} 
\left[ \begin{matrix} 
 2 h^{\mathsf L}+ (1-2z)I_N & 0_N \\ 
 0_N & - 2 h^{\mathsf R} +(1-2z) I_N
 \end{matrix} \right]
 + Y_n^\mathsf L (z)
 \left[ \begin{matrix}
 \mathcal N (h^{\mathsf L}) & 0_N \\ 
 0_N &\mathcal N (- h^{\mathsf R}) 
 \end{matrix} \right] 
= \mathcal N ( \tilde{M}^\mathsf L_n )Y^\mathsf L_n 
$} ,
 \\
\label{eq:edo2}
\resizebox{.9095\hsize}{!}{$
z(1-z) \big(Y^\mathsf R_n\big)^{\prime\prime} 
+
\left[ \begin{matrix} 
2 h^{\mathsf R} + (1-2z)I_N & 0_N\\ 
 0_N & - 2 h^{\mathsf L} + (1-2z)I_N 
 \end{matrix} \right] 
 \big(Y^\mathsf R_n\big)^{\prime} 
 + 
\left[ \begin{matrix} 
 \mathcal N (h^{\mathsf R}) & 0_N\\ 
 0_N & \mathcal N (-h^{\mathsf L}) 
 \end{matrix} \right] 
Y_n{^\mathsf R} (z)
 = 
Y^\mathsf R_n \mathcal N (\tilde{M}^\mathsf R_n )
$}.
 \end{align}
\end{pro}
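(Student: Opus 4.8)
The plan is to derive the second order equations by differentiating the first order equations of Proposition~\ref{prop:mnnn} once, and then using those same first order equations \emph{twice} to eliminate every surviving first derivative of $Y^{\mathsf L}_n$ (resp. $Y^{\mathsf R}_n$) that is not already in the position demanded by the statement; the final regrouping is then precisely the definition of the $\mathcal N$ map. I will carry out the computation for~\eqref{eq:edo1} in detail and obtain~\eqref{eq:edo2} by the mirror-image argument (or from the symmetry~\eqref{MnLR}).

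Write $\mathsf H^{\mathsf L}:=\left[\begin{matrix}h^{\mathsf L}&0_N\\0_N&-h^{\mathsf R}\end{matrix}\right]$, so that~\eqref{eq:firstodeYnL} reads $z(1-z)\big(Y^{\mathsf L}_n\big)'+Y^{\mathsf L}_n\mathsf H^{\mathsf L}=\tilde M^{\mathsf L}_n Y^{\mathsf L}_n$. Differentiating and using $\frac{\d}{\d z}[z(1-z)]=1-2z$ gives $z(1-z)\big(Y^{\mathsf L}_n\big)''+(1-2z)\big(Y^{\mathsf L}_n\big)'+\big(Y^{\mathsf L}_n\big)'\mathsf H^{\mathsf L}+Y^{\mathsf L}_n(\mathsf H^{\mathsf L})'=\big(\tilde M^{\mathsf L}_n\big)'Y^{\mathsf L}_n+\tilde M^{\mathsf L}_n\big(Y^{\mathsf L}_n\big)'$. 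The only term not yet in final form is $\tilde M^{\mathsf L}_n\big(Y^{\mathsf L}_n\big)'$ on the right. I would solve the first order equation for $\big(Y^{\mathsf L}_n\big)'=\frac{1}{z(1-z)}\big(\tilde M^{\mathsf L}_n Y^{\mathsf L}_n-Y^{\mathsf L}_n\mathsf H^{\mathsf L}\big)$ and substitute it, turning that term into $\frac{1}{z(1-z)}\big((\tilde M^{\mathsf L}_n)^2 Y^{\mathsf L}_n-\tilde M^{\mathsf L}_n Y^{\mathsf L}_n\mathsf H^{\mathsf L}\big)$. In the resulting cross term I substitute once more, using $\tilde M^{\mathsf L}_n Y^{\mathsf L}_n=z(1-z)\big(Y^{\mathsf L}_n\big)'+Y^{\mathsf L}_n\mathsf H^{\mathsf L}$, so that $\frac{1}{z(1-z)}\tilde M^{\mathsf L}_n Y^{\mathsf L}_n\mathsf H^{\mathsf L}=\big(Y^{\mathsf L}_n\big)'\mathsf H^{\mathsf L}+\frac{1}{z(1-z)}Y^{\mathsf L}_n(\mathsf H^{\mathsf L})^2$.

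Collecting terms, the two copies of $\big(Y^{\mathsf L}_n\big)'\mathsf H^{\mathsf L}$ add and combine with $(1-2z)\big(Y^{\mathsf L}_n\big)'$ into $\big(Y^{\mathsf L}_n\big)'\big[(1-2z)I_{2N}+2\mathsf H^{\mathsf L}\big]$; the terms $Y^{\mathsf L}_n(\mathsf H^{\mathsf L})'+\frac{1}{z(1-z)}Y^{\mathsf L}_n(\mathsf H^{\mathsf L})^2$ become $Y^{\mathsf L}_n\,\mathcal N(\mathsf H^{\mathsf L})$; and the right-hand side collapses to $\big[(\tilde M^{\mathsf L}_n)'+\frac{1}{z(1-z)}(\tilde M^{\mathsf L}_n)^2\big]Y^{\mathsf L}_n=\mathcal N(\tilde M^{\mathsf L}_n)Y^{\mathsf L}_n$. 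Since $\mathsf H^{\mathsf L}$ is block diagonal, $(\mathsf H^{\mathsf L})^2=\diag\big((h^{\mathsf L})^2,(h^{\mathsf R})^2\big)$ and $\mathcal N(\mathsf H^{\mathsf L})=\diag\big(\mathcal N(h^{\mathsf L}),\mathcal N(-h^{\mathsf R})\big)$, while $(1-2z)I_{2N}+2\mathsf H^{\mathsf L}$ is exactly the coefficient matrix appearing in~\eqref{eq:edo1}; this yields~\eqref{eq:edo1}.

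For~\eqref{eq:edo2} the identical scheme applies to~\eqref{eq:firstodeYnR} with $\mathsf H^{\mathsf R}:=\left[\begin{matrix}h^{\mathsf R}&0_N\\0_N&-h^{\mathsf L}\end{matrix}\right]$ now multiplying $Y^{\mathsf R}_n$ from the left and $\tilde M^{\mathsf R}_n$ from the right, so that every product above has its factors in reversed order; alternatively one conjugates~\eqref{eq:edo1} through the relation~\eqref{MnLR} together with the inverse relation between $Y^{\mathsf L}_n$ and $Y^{\mathsf R}_n$ recorded after~\eqref{eq:ynl}. I expect no analytic difficulty here: the statement is a purely algebraic consequence of the first order equation, and the single genuinely delicate point is the non-commutative bookkeeping—keeping each matrix factor on its correct side throughout the two substitutions, and checking that the block-diagonal structure is preserved so that $\mathcal N(\mathsf H^{\mathsf L})$ reproduces the diagonal blocks $\mathcal N(h^{\mathsf L})$ and $\mathcal N(-h^{\mathsf R})$ demanded by the statement.
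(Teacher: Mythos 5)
Your argument is correct, and it reaches \eqref{eq:edo1}--\eqref{eq:edo2} by a genuinely different (and somewhat more economical) route than the paper. The paper never returns to the first order equation for $Y^{\mathsf L}_n$: it differentiates the logarithmic derivative identity \eqref{eq:Mn} for the fundamental matrix $Z^{\mathsf L}_n$, obtaining $z(1-z)\big(Z^{\mathsf L}_n\big)''\big(Z^{\mathsf L}_n\big)^{-1}+(1-2z)M^{\mathsf L}_n=\mathcal N\big(\tilde M^{\mathsf L}_n\big)$, and then expands $\big(Z^{\mathsf L}_n\big)''\big(Z^{\mathsf L}_n\big)^{-1}$ through the factorization $Z^{\mathsf L}_n=Y^{\mathsf L}_n\operatorname{diag}\big(W^{\mathsf L},(W^{\mathsf R})^{-1}\big)$, which forces it to compute $z(1-z)\big(W^{\mathsf L}\big)''\big(W^{\mathsf L}\big)^{-1}$ and $z(1-z)\big((W^{\mathsf R})^{-1}\big)''W^{\mathsf R}$ from the Pearson equations \eqref{eq:pearsonjacobileft}--\eqref{eq:pearsonjacobiright}. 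You instead treat Proposition~\ref{prop:mnnn} as the sole input: differentiating \eqref{eq:firstodeYnL} and substituting it back twice (once to replace $\tilde M^{\mathsf L}_n\big(Y^{\mathsf L}_n\big)'$, once to resolve the cross term $\tfrac{1}{z(1-z)}\tilde M^{\mathsf L}_n Y^{\mathsf L}_n\mathsf H^{\mathsf L}$) is exactly the same Riccati-type manipulation, but applied to $Y^{\mathsf L}_n$ rather than to $Z^{\mathsf L}_n$, so the weight factors and their second derivatives never appear. I checked the bookkeeping: the two copies of $\big(Y^{\mathsf L}_n\big)'\mathsf H^{\mathsf L}$ do combine with $(1-2z)\big(Y^{\mathsf L}_n\big)'$ into the stated coefficient $(1-2z)I_{2N}+2\mathsf H^{\mathsf L}$, the block $\big(-h^{\mathsf R}\big)'+\tfrac{(h^{\mathsf R})^2}{z(1-z)}$ is indeed $\mathcal N(-h^{\mathsf R})$, and the right-hand side collapses to $\mathcal N\big(\tilde M^{\mathsf L}_n\big)Y^{\mathsf L}_n$; the mirror computation for \eqref{eq:edo2} goes through with all products reversed. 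What the paper's route buys is that it keeps the constant-jump matrix $Z^{\mathsf L}_n$ and the Pearson data in view, consistent with the rest of Section~\ref{sec:3}; what yours buys is the observation that Proposition~\ref{prop:mnn} is a purely formal consequence of Proposition~\ref{prop:mnnn}, with no further analytic input.
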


\begin{proof}
Differentiating in~\eqref{eq:Mn} we get
\begin{align*}
\left(Z_{n}^{\mathsf L}\right)^{\prime \prime}\left(Z_{n}^{\mathsf L}\right)^{-1}=\frac{\left(\tilde{M}_{n}^{\mathsf L}\right)^{\prime}}{z(1-z)}-(1-2z)\frac{\tilde{M}_{n}^{\mathsf L}}{z^{2}(1-z)^2}+\frac{\left(\tilde{M}_{n}^{\mathsf L}\right)^{2}}{z^{2}(1-z)^2},
\end{align*}
so that
\begin{align*}
z(1-z)\left(Z_{n}^{\mathsf L}\right)^{\prime \prime}\left(Z_{n}^{\mathsf L}\right)^{-1}+(1-2z)M_n^{\mathsf L}=\left(\tilde{M}_{n}^{\mathsf L}\right)^{\prime}+\frac{\left(\tilde{M}_{n}^{\mathsf L}\right)^{2}}{z(1-z)}= \mathcal N (\tilde{M}^\mathsf L_n ) .
\end{align*}
Now let us see that
\begin{align*}
(1-2z)M_n^{\mathsf L}=z(1-z)\left(Y_n^{\mathsf L}\right)^\prime{(Y_n^{\mathsf L})}^{-1} + Y_n^{\mathsf L}\left[\begin{matrix}
h^{\mathsf L}&0_N
 \\
0_N & -h^{\mathsf R}
\end{matrix}
\right]
{(Y_n^{\mathsf L})}^{-1} .
\end{align*}
From~\eqref{eq:pearsonjacobileft} we have
\begin{align*}
z(1-z){\left(W^{\mathsf L}\right)}^{\prime\prime}{\left(W^{\mathsf L}\right)}^{-1}=\frac{{(h^{\mathsf L})}^2}{z(1-z)}-\frac{1-2z}{z(1-z)}h^{\mathsf L}+{(h^{\mathsf L})}^\prime ,
\end{align*}
and
\begin{align*}
z(1-z){\left((W^{\mathsf R})^{-1}\right)}^{\prime\prime}W^{\mathsf R}=\frac{({h^{\mathsf R})}^2}{z(1-z)}+\frac{1-2z}{z(1-z)}h^{\mathsf R}-{(h^{\mathsf R})}^\prime .
\end{align*}
Since
\begin{multline*}
z(1-z)\left(Z_{n}^{\mathsf L}\right)^{\prime \prime}{\left(Z_{n}^{\mathsf L}\right)}^{-1}=z(1-z){(Y_n^{\mathsf L})}^{\prime\prime}Y_n^{\mathsf L}+{\left(Y_n^{\mathsf L}\right)}^\prime
\left[\begin{matrix}
2h^{\mathsf L}& 0_N \\
0_N & -2h^{\mathsf R}
\end{matrix}
\right]
{(Y_n^{\mathsf L})}^{-1} \\
+Y_n^{\mathsf L}\left[\begin{matrix}
{z(1-z)\left(W^{\mathsf L}\right)}^{\prime\prime}{\left(W^{\mathsf L}\right)}^{-1} & 0_N \\
0_N & {z(1-z)\left((W^{\mathsf R})^{-1}\right)}^{\prime\prime}W^{\mathsf R} 
\end{matrix}\right]
{(Y_n^{\mathsf L})}^{-1} ,
\end{multline*}
we get the stated result~\eqref{eq:edo1}. The equation~\eqref{eq:edo2} follows in a similar way from definition of~$M_n^{\mathsf R}$ in~\eqref{eq:Mn}.
\end{proof}

We introduce the fol\-low\-ing $\C^{2N\times 2N}$ valued functions
\begin{align*}
\mathsf H_n^{\mathsf L} =
\left[ \begin{matrix}
 \mathsf H_{1,1,n}^{\mathsf L} & \mathsf H_{1,2,n}^{\mathsf L} \\[.05cm] 
 \mathsf H_{2,1,n}^{\mathsf L} & \mathsf H_{2,2,n}^{\mathsf L} 
 \end{matrix} \right]
 &
 : = 
 \mathcal N (\tilde{M}^\mathsf L_n ), &
\mathsf H_n^{\mathsf R} =
\left[ \begin{matrix}
\mathsf H_{1,1,n}^{\mathsf R} & \mathsf H_{1,2,n}^{\mathsf R} \\[.05cm]
 \mathsf H_{2,1,n}^{\mathsf R} & \mathsf H_{2,2,n}^{\mathsf R} 
 \end{matrix} \right]
&
:= \mathcal N (\tilde{M}^\mathsf R_n ).
\end{align*}
It holds~that the second order matrix differential equations~\eqref{eq:edo1} and~\eqref{eq:edo2} split in the fol\-low\-ing differential relations
\begin{align*}
z(1-z) \big(P_n^{\mathsf L} \big)^{\prime \prime} + \big( P_n^{\mathsf L} \big)^{ \prime}\big( 2 h^{\mathsf L} +(1-2z) I_N \big)
+
P_n^{\mathsf L} \mathcal N ( h^{\mathsf L}) 
& = \mathsf H_{1,1,n}^{\mathsf L} P_n^{\mathsf L} - \mathsf H_{1,2,n}^{\mathsf L} C_{n-1}P_{n-1}^{\mathsf L} , 
 \\
z(1-z) \big(Q_n^{\mathsf L} \big)^{\prime \prime}-\big(Q_n^{\mathsf L} \big)^{ \prime}\big( 2 h^{\mathsf R} - (1-2z)I_N \big)
+
Q_n^{\mathsf L} \mathcal N ( -h^{\mathsf R} ) & =
 \mathsf H_{1,1,n}^{\mathsf L} Q_n^{\mathsf L} - \mathsf H_{1,2,n}^{\mathsf L} C_{n-1} Q_{n-1}^{\mathsf L} ,
 \\
z(1-z) \big(P_n^{\mathsf R} \big)^{\prime \prime} + \big( 2 h^{\mathsf R} +(1-2z) I_N \big) \big(P_n^{\mathsf R} \big)^{ \prime}
+ \mathcal N ( h^{\mathsf R} ) P_n^{\mathsf R} 
 &= 
P_n^{\mathsf R} \mathsf H_{1,1,n}^{\mathsf R} 
- P_{n-1}^{\mathsf R} C_{n-1}\mathsf H_{2,1,n}^{\mathsf R} ,
 \\
z(1-z) \big(Q_n^{\mathsf R} \big)^{\prime \prime}- \big( 2 h^{\mathsf L} -(1-2z) I_N \big) \big(Q_n^{\mathsf R} \big)^{ \prime}
+ \mathcal N (- h^{\mathsf L}) Q_n^{\mathsf R} 
 &= 
Q_n^{\mathsf R} \mathsf H_{1,1,n}^{\mathsf R} 
- Q_{n-1}^{\mathsf R} C_{n-1}\mathsf H_{2,1,n}^{\mathsf R}.
\end{align*}
Using the calculation made in~\eqref{generalMnL} We want to recover here some known formulas in the scalar case.

\begin{nexam}
Let us consider the weight $W(z)=z^\alpha (1-z)^\beta$, with
 $\alpha$, $\beta$ scalars in $(-1, \infty)$. Then, the scalar second order equation for $\left\{P^{\mathsf L}_n\right\}_{n\in\N}$ and $\left\{Q^{\mathsf L}_n\right\}_{n\in\N}$ (cf. for example~\cite{Szego}) is given by
\begin{align} \label{scalarsode}
z(1-z) P_n^{\prime\prime} (z) + \big(1+\alpha - (\alpha+\beta +2)z\big) P_n^{\prime} (z) + n(\alpha+\beta+n+1) P_n (z)&=0 , \\ 
 \label{scalarsodeq}
z(1-z) Q_n^{\prime\prime} (z) + \big(1-\alpha + (\alpha+\beta -2)z\big) Q_n^{\prime} (z) +(n+1)(\alpha+\beta+n) Q_n (z)&=0 .
\end{align}
\end{nexam}

In fact, from~\eqref{generalMnL}
\begin{align*}
\tilde{M}^{\mathsf L}_n (z)=\left[ \begin{matrix}
-(\frac{\alpha+\beta}{2}+n)z+p_n^1+n+\frac{\alpha}{2}& -C_n^{-1}(\alpha+\beta+2n+1)\\
C_{n-1}(\alpha+\beta+2n-1)& (\frac{\alpha+\beta}{2}+n)z-p_n^1-n-\frac{\alpha}{2}
 \end{matrix} \right] ,
\end{align*}
it is easy to see that 
\begin{align*}
\left( \tilde{M}^{\mathsf L}_n (z)\right) ^2=
\left(-(\frac{\alpha+\beta}{2}+n)z+p_n^1+n+\frac{\alpha}{2}\right)^2 -\gamma_n\left((\alpha+\beta+2n)^2-1 \right)
I_2,
\end{align*}
and also,
$ \left( \tilde{M}^{\mathsf L}_n (z)\right)^\prime=\left[ \begin{matrix}
-\frac{\alpha+\beta}{2}+n&0\\ 0 &\frac{\alpha+\beta}{2}-n
 \end{matrix} \right] $.
Using now proposition~\ref{prop:mnn} we get
\begin{align*}
z&(1-z)P_n^{\prime\prime} (z) + \big(1+\alpha - (\alpha+\beta +2)z\big) P_n^{\prime} (z) -
n(\alpha+\beta+n+1) P_n (z)
\\
 &=\left(\frac{n(\alpha+n)+p_n^1(\alpha+\beta+2n)}{1-z}+\frac{\frac{\alpha^2}{4}-\left(p_n^1+\frac{\alpha}{2}+n \right)^2+\gamma_n\left((\alpha+\beta+2n)^2-1 \right)}{z(1-z)} \right)
 P_n (z) .
\end{align*}
By equalizing poles between left and right hand side on $0$ then on $1$ we have 
\begin{align*}
\left(\frac{\alpha^2}{4}-\left(p_n^1+\frac{\alpha}{2}+n \right)^2+\gamma_n\left((\alpha+\beta+2n)^2-1 \right)\right)P_n (0) & =0
 \\
\left(n(\alpha+n)+p_n^1(\alpha+\beta+2n)\right)P_n (1) &=0
\end{align*}
which, taking into account $P_n (0)$, $P_n (1)\neq 0$, leads to the representation of $p_n^1$ and~$\gamma_n$, as well as~\eqref{scalarsode}.
The equation~\eqref{scalarsodeq} for the $\{ Q_n\}_{n \in \mathbb N}$ follows from the above considerations.

\section{Matrix discrete Painlev\'e~IV} \label{sec:5}

We can consider, using the notation introduced before, the matrix weight measure $W (z) = W_{\mathsf L} (z) W_{\mathsf R} (z) $
such that
\begin{align*}
z(1-z) (W^\mathsf L)^{\prime}(z) &= ( h_0^{\mathsf L} +
h_1^{\mathsf L} z + h_2^{\mathsf L} z^2) W^{\mathsf L}(z),
 \\
z(1-z)(W^\mathsf R)^{\prime}(z)
 & = W^{\mathsf R}(z) (h_0^{\mathsf R} +
h_1^{\mathsf R} z + h_2^{\mathsf R} z^2) .
\end{align*}
From Theorem~\ref{prop:mn} we get the matrix $\tilde{M}_n=z(1-z) M_n^{\mathsf L} $ is given explicitly by
\begin{align*}
\begin{cases}
& (\tilde{M}_n^{\mathsf L})_{11} = C_{n}^{-1} h_2^{\mathsf R} C_{n-1} +
(h_0^{\mathsf L} + h_1^{\mathsf L} z + h_2^{\mathsf L} z^2) + h_1^{\mathsf L}
q_{\mathsf R,n-1}^{1} + p_{\mathsf L,n}^{1} h_1^{\mathsf L} \\
& \phantom{o}
+ z(h_2^{\mathsf L}q_{\mathsf R,n-1}^{1}+ p_{\mathsf L,n}^{1} h_2^{\mathsf L})
+ h_2^{\mathsf L} q_{\mathsf R,n-1}^{2}
+ p_{\mathsf L,n}^{2} h_2^{\mathsf L}
+ p_{\mathsf L,n}^{1} h_2^{\mathsf L} q_{\mathsf R,n-1}^{1}+nI_N-znI_N+p_{\mathsf L,n}^1 , 
\\
& (\tilde{M}_n^{\mathsf L})_{12} = (h_1^{\mathsf L} + h_2^{\mathsf L} z +
h_2^{\mathsf L} q_{\mathsf R,n}^{1} + p_{\mathsf L,n}^{1} h_2^{\mathsf L})
C_n^{-1} + C_n^{-1} (h_1^{\mathsf R} + h_2^{\mathsf R} z + h_2^{\mathsf R}
p_{\mathsf R,n}^{1} + q_{\mathsf L,n}^{1} h_2^{\mathsf R}) \\
& \phantom{okokokokokokokokokokokokokokokokokokokokokok}-(2n+1)C_{n}^{-1} ,
\\
& (\tilde{M}_n^{\mathsf L})_{21} = -C_{n-1} (h_1^{\mathsf L} + h_2^{\mathsf L} z +
h_2^{\mathsf L} q_{\mathsf R,n-1}^{1} + p_{\mathsf L,n-1}^{1} h_2^{\mathsf L}) \\
& \phantom{olaolaola}- (h_1^{\mathsf R} + h_2^{\mathsf R} z + h_2^{\mathsf R} p_{\mathsf
R,n-1}^{1} + q_{\mathsf L,n-1}^{1} h_2^{\mathsf R}) C_{n-1} +(2n-1)C_{n-1} ,
\\
 & (\tilde{M}_n^{\mathsf L})_{22} = -C_{n-1} h_2^{\mathsf L} C_n^{-1} -
(h_0^{\mathsf R} + h_1^{\mathsf R} z + h_2^{\mathsf R} z^2) - h_1^{\mathsf R}
p_{\mathsf R,n}^{1} - q_{\mathsf L,n-1}^{1} h_1^{\mathsf R} \\
&
\,
- z
(h_2^{\mathsf R}p_{\mathsf R,n}^{1}
+ q_{\mathsf L,n-1}^{1} h_2^{\mathsf R})
 - h_2^{\mathsf R} p_{\mathsf R,n}^{2}
- q_{\mathsf L,n-1}^{2} h_2^{\mathsf R}
- q_{\mathsf L,n-1}^{1} h_2^{\mathsf R} p_{\mathsf R,n}^{1} -nI_N +znI_N-p_{\mathsf R,{n}}^1 .
\end{cases}
\end{align*}
Using the three term recurrence relation for $\{ P_n^{\mathsf L} \}_{n \in \mathbb N}$ we get that 
$p_{\mathsf L,n}^1 - p_{\mathsf L,n+1}^1 = \beta_n^{\mathsf L}$ and $p_{\mathsf L,n}^2 - p_{\mathsf L,n+1}^2 = \beta_n^{\mathsf L} p_{\mathsf L,n}^1 + \gamma_n^{\mathsf L}$
where $\gamma_n^{\mathsf L} = C_{n}^{-1}C_{n-1} $.
Consequently,
\begin{align*}
p_{\mathsf L,n}^1 &= - \sum_{k=0}^{n-1} \beta_k^{\mathsf L} , &
p_{\mathsf L,n}^2 &=
\sum_{i,j=0}^{n-1} \beta_i^{\mathsf L} \beta_j^{\mathsf L} - \sum_{k=0}^{n-1}\gamma_k^{\mathsf L}.
\end{align*}
In the same manner, from the three term recurrence relation for $\{ Q_n^{\mathsf L} \}_{n \in \mathbb N}$ we deduce that
$q_{\mathsf L,n}^1 - q_{\mathsf L,n-1}^1 = \beta_n^{\mathsf R}:=C_n \beta_n^{\mathsf L} C_n^{-1}$ and $q_{\mathsf L,n}^2 - q_{\mathsf L,n-1}^2 = \beta_n^{\mathsf R} q_{\mathsf L,n}^1 + \gamma_n^{\mathsf R}$ ,
where $\gamma_n^{\mathsf R} = C_n C_{n+1}^{-1}$.

Now, we consider that $W = W^{\mathsf L}$ and $ W^{\mathsf R} = I_N$, and then use the representation for $ \{ P_n^{\mathsf L} \}_{n \in \N}$ and $\{ Q_n^{\mathsf L} \}_{n \in \N}$ in $z$ powers, the $(1,2)$ and $(2,2)$ entries in~\eqref{eq:firstodeYnL} read
\begin{align*} 
& (2n+1) (I_N-\beta_n^{\mathsf L}) + h_0^{\mathsf L} 
+ h_2^{\mathsf L} (\gamma_{n+1}^{\mathsf L} + \gamma_{n}^{\mathsf L} + (\beta_{n}^{\mathsf L})^2) + h_1^{\mathsf L} \beta_{n}^{\mathsf L} \\
&\phantom{okokokokokoko}= [p_{\mathsf L,n}^1, h_2^{\mathsf L}] p_{\mathsf L,n+1}^1
- [p_{\mathsf L,n}^2, h_2^{\mathsf L}] 
- [p_{\mathsf L,n}^1, h_1^{\mathsf L}]-p_{\mathsf L,n}^1-C_{n}^{-1}p_{\mathsf L,n+1}^1C_{n}, & \\
& \beta_n^{\mathsf L}-(\beta_n^{\mathsf L})^2= \gamma_{n}^{\mathsf L} \big( h_2^{\mathsf L}(\beta_n^{\mathsf L} + \beta_{n-1}^{\mathsf L}) + [p_{\mathsf L,n-1}^1, h_2^{\mathsf L}] + h_1^{\mathsf L}-(2n-1)I_N \big) \\
&\phantom{MMMMMM}
-
\big( h_2^{\mathsf L}(\beta_n^{\mathsf L} + \beta_{n+1}^{\mathsf L}) + [p_{\mathsf L,n}^1, h_2^{\mathsf L}] + h_1^{\mathsf L}-(2n+3)I_N \big) \gamma_{n+1}^{\mathsf L} - [p_{\mathsf L,n}^1, p_{\mathsf L,n+1}^1].
\end{align*}
We can write these equations as follows
\begin{align}\label{eq:dPIV_with_B_1}
& \hspace{-.225cm} (2n+1) I_N + h_0^{\mathsf L} 
+ h_2^{\mathsf L} (\gamma_{n+1}^{\mathsf L} + \gamma_{n}^{\mathsf L} )
+\left(h_2^{\mathsf L} \beta_{n}^{\mathsf L}+ h_1^{\mathsf L}-(2n+1)I_N\right) \beta_{n}^{\mathsf L} +\sum_{k=0}^{n-1} \beta_k^{\mathsf L} \\
& \hspace{-.225cm} \nonumber
 +C_{n}^{-1}\sum_{k=0}^{n} \beta_k^{\mathsf L}C_{n} = 
 \Big[ \sum_{k=0}^{n-1} \beta_k^{\mathsf L}, h_2^{\mathsf L}\Big] \sum_{k=0}^{n} \beta_k^{\mathsf L}
- \Big[\sum_{i,j=0}^{n-1} \beta_i^{\mathsf L} \beta_j^{\mathsf L} - \sum_{k=0}^{n-1}\gamma_k^{\mathsf L}
, h_2^{\mathsf L}\Big] - \Big[ \sum_{k=0}^{n-1} \beta_k^{\mathsf L}, h_1^{\mathsf L}\Big],
 \\
\label{eq:dPIV_with_B_2}
 &\hspace{-.225cm}
\beta_n^{\mathsf L}-(\beta_n^{\mathsf L})^2-\gamma_{n}^{\mathsf L} \big( h_2^{\mathsf L}(\beta_n^{\mathsf L} + \beta_{n-1}^{\mathsf L}) + h_1^{\mathsf L}-(2n-1)I_N \big) + \big( h_2^{\mathsf L}(\beta_n^{\mathsf L} + \beta_{n+1}^{\mathsf L}) + h_1^{\mathsf L} \\
 &\hspace{-.225cm} \nonumber
-(2n+3)I_N \big) \gamma_{n+1}^{\mathsf L} 
= \gamma_{n}^{\mathsf L} \Big[ \sum_{k=0}^{n-2} \beta_k^{\mathsf L} , h_2^{\mathsf L}\Big] -
\Big[\sum_{k=0}^{n-1} \beta_k^{\mathsf L} , h_2^{\mathsf L}\Big] \gamma_{n+1}^{\mathsf L} -\Big[\sum_{k=0}^{n-1} \beta_k^{\mathsf L} , \sum_{k=0}^{n} \beta_k^{\mathsf L}\Big] .
\end{align}
We will show now that this system contains a noncommutative version of an instance of discrete Painlev\'e~IV equation.

We see, on the \emph{r.h.s.} of the nonlinear discrete equations~\eqref{eq:dPIV_with_B_1} and~\eqref{eq:dPIV_with_B_2} nonlocal terms (sums) in the recursion coefficients $\beta_n^{\mathsf L}$ and $\gamma_n^{\mathsf L}$, all of them inside commutators. 
These nonlocal terms vanish whenever the three matrices $\{ h_0^{\mathsf L}, h_1^{\mathsf L}, h_2^{\mathsf L}\}$ conform an Abelian set, so that $\{ h_0^{\mathsf L}, h_1^{\mathsf L}, h_2^{\mathsf L},\beta_{n}^{\mathsf L}, \gamma_{n}^{\mathsf L}\}$ is also an Abelian set. In this commutative setting we~have
\begin{align*}
&
(2n+1) I_N + h_0^{\mathsf L} 
+ h_2^{\mathsf L} (\gamma_{n+1}^{\mathsf L} + \gamma_{n}^{\mathsf L} ))
+\left(h_2^{\mathsf L} \beta_{n}^{\mathsf L}+ h_1^{\mathsf L}-(2n+1)I_N\right) \beta_{n}^{\mathsf L}+p_{\mathsf L,n}^1+p_{\mathsf L,n+1}^1= 0_N,
 \\
&
\beta_n^{\mathsf L} -(\beta_n^{\mathsf L})^2 -\gamma_{n}^{\mathsf L} \big( h_2^{\mathsf L}(\beta_n^{\mathsf L} + \beta_{n-1}^{\mathsf L}) + h_1^{\mathsf L} -(2n-1)I_N\big) + \big( h_2^{\mathsf L}(\beta_n^{\mathsf L} + \beta_{n+1}^{\mathsf L}) \\
&
\hspace{8cm}
 + h_1^{\mathsf L}-(2n+3)I_N \big) \gamma_{n+1}^{\mathsf L} =0_N.
\end{align*}
In terms of
\begin{align*}
\xi_n := \frac{ h_0^{\mathsf L} }{2}+ n I_N+h_2^{\mathsf L} \gamma_n +p_{\mathsf L,n}^1
&&
\text{and}
&&
\mu_n := h_2^{\mathsf L} \beta_n ^{\mathsf L} +h_1^{\mathsf L}-(2n+1)I_N ,
\end{align*}
the above equations reads as
\begin{align*}
-\mu_n \beta^{\mathsf L}_n & = \xi_n + \xi_{n+1} 
&& \text{and} &&
\beta^{\mathsf L}_n (\xi_n -\xi_{n+1}) = 
 \mu_{n+1} \gamma_{n+1}-\gamma_n \mu_{n-1}.
\end{align*}
Now, we multiply the second equation by $\displaystyle \mu_n$ and taking into account the first one we arrive~to
\begin{align*}
- (\xi_n + \xi_{n+1})(\xi_n - \xi_{n+1})
= -\gamma_n \mu_{n-1} \mu_{n} + \gamma_{n+1} \mu_{n} \mu_{n+1} ,
\end{align*}
and so
\begin{align*}
 \xi_{n+1}^2 - \xi_n^2
= \gamma_{n+1} \mu_{n} \mu_{n+1} -\gamma_n \mu_{n-1} \mu_{n} .
\end{align*}
Hence,
\begin{align*}
\xi_{n+1}^2 - \xi_0^2 = \gamma_{n+1} \mu_{n} \mu_{n+1} 
&& \text{and} &&
\beta^{\mathsf L}_n \mu_n& = - (\xi_n + \xi_{n+1})
\end{align*}
coincide to the ones 
presented in~\cite{boelen_Van_Assche} as discrete Painlev\'e~IV (dPIV) equation.
In fact, taking $\nu_n = \mu_n^{-1}$ we finally arrive to
\begin{align*}
 \nu_{n} \nu_{n+1} 
 & = \frac{h_2^{\mathsf L}\big(\xi_{n+1} - h_0^{\mathsf L}/2 - nI_N-p_{\mathsf L,n}^1\big)}{\xi_{n+1}^2 - \xi_0^2} , \\
\xi_n + \xi_{n+1} & = 
\left(\left(h_2^{\mathsf L}\right)^{-1} h_1^{\mathsf L} -\left(h_2^{\mathsf L}\right)^{-1} \nu_n^{-1} -(2n+1)\left(h_2^{\mathsf L}\right)^{-1}\right) \nu_n^{-1} .
\end{align*}

Now, we are able to state that,

\begin{teo}[\textbf{Non-Abelian extension of the dPIV}]
Equations~\eqref{eq:dPIV_with_B_1}
and
\eqref{eq:dPIV_with_B_2}
defines a nonlocal nonlinear non-Abelian system for the recursion coefficients.
\end{teo}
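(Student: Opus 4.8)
The plan is to obtain the system \eqref{eq:dPIV_with_B_1}--\eqref{eq:dPIV_with_B_2} by reading off two block entries of the first order differential equation \eqref{eq:firstodeYnL}, and then to verify that it degenerates to the classical discrete Painlev\'e~IV equation under commutativity, which is exactly what justifies calling it a non-Abelian extension.

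First I would specialize to the factorization with $W=W^{\mathsf L}$ and $W^{\mathsf R}=I_N$; the right Pearson equation then forces $h^{\mathsf R}=0$, so \eqref{eq:firstodeYnL} reduces to $z(1-z)\big(Y^{\mathsf L}_n\big)'+Y^{\mathsf L}_n\diag(h^{\mathsf L},0_N)=\tilde M^{\mathsf L}_n Y^{\mathsf L}_n$, with $h^{\mathsf L}=h_0^{\mathsf L}+h_1^{\mathsf L}z+h_2^{\mathsf L}z^2$ and $\tilde M^{\mathsf L}_n$ given by the explicit block formula computed above. Since the $(1,2)$ and $(2,2)$ entries of $Y^{\mathsf L}_n$ are the second kind functions $Q^{\mathsf L}_n$ and $-C_{n-1}Q^{\mathsf L}_{n-1}$, reading off the $(1,2)$ and $(2,2)$ entries of the matrix identity, inserting the asymptotic expansions of $Q^{\mathsf L}_n$ and $Q^{\mathsf L}_{n-1}$ about $z=\infty$, and matching the leading coefficients of $1/z$ produces two matrix relations among $p^1_{\mathsf L,n}$, $p^2_{\mathsf L,n}$, $C_n$ and the Pearson data $h_0^{\mathsf L},h_1^{\mathsf L},h_2^{\mathsf L}$. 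This coefficient bookkeeping---tracking the conjugating factors $C_{n-1}$ built into \eqref{eq:ynl} and using the identities $p^1_{\mathsf R,n}=-q^1_{\mathsf L,n-1}$, $p^1_{\mathsf L,n}=-q^1_{\mathsf R,n-1}$---is the delicate part of the argument.

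Next I would eliminate the polynomial coefficients in favour of the recurrence data. The three term recurrence for $\{P^{\mathsf L}_n\}_{n\in\N}$ gives $p^1_{\mathsf L,n}-p^1_{\mathsf L,n+1}=\beta^{\mathsf L}_n$ and $p^2_{\mathsf L,n}-p^2_{\mathsf L,n+1}=\beta^{\mathsf L}_n p^1_{\mathsf L,n}+\gamma^{\mathsf L}_n$, which telescope to $p^1_{\mathsf L,n}=-\sum_{k=0}^{n-1}\beta^{\mathsf L}_k$ and $p^2_{\mathsf L,n}=\sum_{i,j=0}^{n-1}\beta^{\mathsf L}_i\beta^{\mathsf L}_j-\sum_{k=0}^{n-1}\gamma^{\mathsf L}_k$. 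Substituting these sums into the two relations of the previous step yields exactly \eqref{eq:dPIV_with_B_1} and \eqref{eq:dPIV_with_B_2}. At this point the qualitative claims are immediate: the system is \emph{nonlocal} because the telescoped sums survive on the right hand sides, it is \emph{nonlinear} through the quadratic term $(\beta^{\mathsf L}_n)^2$ and the bilinear products of the $\gamma^{\mathsf L}_k$ with the $\beta^{\mathsf L}_k$, and it is \emph{non-Abelian} because every surviving sum appears only inside a commutator with $h_1^{\mathsf L}$ or $h_2^{\mathsf L}$.

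Finally, to confirm the identification with dPIV I would impose that $\{h_0^{\mathsf L},h_1^{\mathsf L},h_2^{\mathsf L}\}$ is an Abelian set, whence all commutators---and hence all the nonlocal sums---vanish and the Pearson data commute with $\beta^{\mathsf L}_n,\gamma^{\mathsf L}_n$. Introducing $\xi_n$ and $\mu_n$ as above collapses the pair to $-\mu_n\beta^{\mathsf L}_n=\xi_n+\xi_{n+1}$ and $\beta^{\mathsf L}_n(\xi_n-\xi_{n+1})=\mu_{n+1}\gamma_{n+1}-\gamma_n\mu_{n-1}$; multiplying the second by $\mu_n$, using the first, and telescoping gives $\xi_{n+1}^2-\xi_0^2=\gamma_{n+1}\mu_n\mu_{n+1}$, which with $\nu_n=\mu_n^{-1}$ is precisely the dPIV system of \cite{boelen_Van_Assche}. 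I expect the main obstacle to be the first step: organizing the asymptotic matching so that the left/right placement of the noncommuting factors is correct and the commutator structure on the right hand sides emerges cleanly; once that is secured, the telescoping and the Abelian reduction are purely algebraic.
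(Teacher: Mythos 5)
Your proposal follows the paper's own derivation essentially verbatim: specialize to $W=W^{\mathsf L}$, $W^{\mathsf R}=I_N$, read off the $(1,2)$ and $(2,2)$ entries of~\eqref{eq:firstodeYnL} using the explicit $\tilde M^{\mathsf L}_n$, telescope $p^1_{\mathsf L,n}$ and $p^2_{\mathsf L,n}$ via the three term recurrence to obtain~\eqref{eq:dPIV_with_B_1}--\eqref{eq:dPIV_with_B_2}, and confirm the non-Abelian character by checking that the commutators (and hence the nonlocal sums) vanish in the Abelian case, where the substitutions $\xi_n$, $\mu_n$, $\nu_n=\mu_n^{-1}$ recover the dPIV system of Boelen--Van Assche. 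This is correct and matches the paper's argument in both structure and detail.
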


\end{document}